\theoremstyle{plain}
\newtheorem{theorem}[subsection]{Theorem}
\newtheorem{proposition}[subsection]{Proposition}
\theoremstyle{definition}
\theoremstyle{remark}
\newtheorem{example}[subsection]{Example}
\def\t{\otimes}
\newcommand{\gnab}{\text{\rm \textexclamdown}}
\DeclareMathOperator{\Hom}{Hom}
\DeclareMathOperator{\id}{id}
\DeclareMathOperator{\ev}{ev}
\newcommand{\C}{\ensuremath{\mathbb{C}}}
\newcommand{\Cb}{\ensuremath{\mathbf{C}}}
\newcommand{\Db}{\ensuremath{\mathbf{D}}}
\newcommand{\K}{\ensuremath{\mathbb{K}}}
\newcommand{\two}{\mathbf{2}}
\newcommand{\op}{\textnormal{op}}
\newcommand{\az}{\ensuremath{\alpha}}
\newcommand{\lz}{\ensuremath{\lambda}}
\newcommand{\rz}{\ensuremath{\rho}}
\newcommand{\sz}{\ensuremath{\sigma}}
\newcommand{\Vect}{\ensuremath{\mathsf{Vect}}}
\newcommand{\Pt}{\ensuremath{\mathsf{Pt}}}
\newcommand{\Set}{\ensuremath{\mathsf{Set}}}
\newcommand{\Mon}{\ensuremath{\mathsf{Mon}}}
\newcommand{\cMon}{\ensuremath{\mathsf{Mon^{c}}}}
\newcommand{\CoMon}{\ensuremath{\mathsf{CoMon}}}
\newcommand{\cCoMon}{\ensuremath{\mathsf{CoMon^{coc}}}}
\newcommand{\Lie}{\ensuremath{\mathsf{Lie}}}
\newcommand{\BiMon}{\ensuremath{\mathsf{BiMon}}}
\newcommand{\Monu}{\underline{\Mon}}
\newcommand{\cMonu}{\underline{\cMon}}
\newcommand{\CoMonu}{\underline{\CoMon}}
\newcommand{\cCoMonu}{\underline{\cCoMon}}
\newcommand{\MonC}{\Mon(\Cb)}
\newcommand{\cMonC}{\cMon(\Cb)}
\newcommand{\CoMonC}{\CoMon(\Cb)}
\newcommand{\cCoMonC}{\cCoMon(\Cb)}
\newcommand{\BiMonC}{\BiMon(\Cb)}
\newcommand{\LieC}{\Lie(\Cb)}
\newcommand{\MonAct}[2]{\ensuremath{\mathsf{MonAct}(#1,#2)}}
\newcommand{\LieAct}[2]{\ensuremath{\mathsf{LieAct}(#1,#2)}}
\newcommand{\MonActu}[2]{\ensuremath{\underline{\mathsf{MonAct}}(#1,#2)}}
\newcommand{\LieActu}[2]{\ensuremath{\underline{\mathsf{LieAct}}(#1,#2)}}
\newcommand{\LMonAct}[2]{\Lie(\MonActu{#1}{#2})}
\newcommand{\LLieAct}[2]{\Lie(\LieActu{#1}{#2})}
\newcommand{\LACC}{{\rm (LACC)}}
\newcommand{\Ker}{\textnormal{Ker}}
\def\pullback{
 \ar@{-}[]+R+<6pt,-1pt>;[]+RD+<6pt,-6pt>%
 \ar@{-}[]+D+<1pt,-6pt>;[]+RD+<6pt,-6pt>}
\def\dottedpullback{%
 \ar@{.}[]+R+<6pt,-1pt>;[]+RD+<6pt,-6pt>%
 \ar@{.}[]+D+<1pt,-6pt>;[]+RD+<6pt,-6pt>}
\begin{document}

\title[Algebraic exponentiation for Lie algebras]{Algebraic exponentiation for Lie algebras}

	\author{Xabier García-Martínez}
\address[Xabier García-Martínez]{Departamento de Matemáticas, Esc.\ Sup.\ de Enx.\ Informática, Campus de Ourense, Universidade de Vigo, E--32004, Ourense, Spain
	\newline and\newline
	Faculty of Engineering, Vrije Universiteit Brussel, Pleinlaan 2, B--1050 Brussel, Belgium}
\email{xabier.garcia.martinez@uvigo.gal}

\author{James R. A. Gray}
\address[James Gray]{Mathematics Division, Department of Mathematical Sciences Stellenbosch University, Private Bag X1,7602 Matieland, South Africa}
\email{jamesgray@sun.ac.za}


\thanks{The first author was supported by Ministerio de Economía y Competitividad (Spain), with grant number MTM2016-79661-P and is a Postdoctoral Fellow of the Research Foundation–Flanders (FWO).
The second author was supported in part by the National Research Foundation of South Africa (Grant Number 109203)}

\begin{abstract}
It is known that the category of Lie algebras over a ring admits \emph{algebraic exponents}. 
The aim of this paper is to show that the same is true for the category of internal Lie algebras in an 
additive, cocomplete, symmetric, closed, monoidal category. 
In this way, we add some new examples to the brief list of known locally algebraically cartesian closed categories, 
including the categories of Lie superalgebras and differentially graded Lie algebras amongst others. 
\end{abstract}

\subjclass[2010]{18E99, 18A40, 18D15, 18D10, 17B99}
\keywords{locally algebraically cartesian closed, semi-abelian category; algebraic exponentiation}

\maketitle
\section{Introduction}
Let $\C$ be a finitely complete category. Given an object $B$ of~$\C$, we write $\Pt_{B}(\C)$ for the \emph{category of points over $B$} in which an object $(A, p, s)$ is a split epimorphism~$p\colon {A\to B}$ in $\C$, together with a chosen section $s\colon {B\to A}$, so that $p\circ s=1_{B}$.
A morphism of $\Pt_B(\C)$ from $(A,p, s)$ to $(A', p', s')$ is a morphism $f \colon A \to A'$ in $\C$ such that $p' \circ f = p$ and $f \circ s = s'$. For each morphism $q \colon E \to B$, there is a \emph{change-of-base} functor $q^{*} \colon \Pt_B(\C) \to \Pt_E(\C)$, that takes a point $(A, p, s)$ to the point $(E\times_{B} A, \pi_1,\langle \id,sq\rangle)$ obtained from the pullback
\[
\xymatrix{E\times_{B}A \ar[r]^-{\pi_2} \ar@<0ex>[d]^-{\pi_1}  \pullback & A \ar@<.5ex>[d]^-{p}  \\
	E \ar[r]_-{q} \ar@{..>}@<1ex>[u]^-{\langle \id, sp\rangle} & B \ar@{..>}@<.5ex>[u]^-{s}}
\]

If for each morphism $q$ in $\C$, the change-of-base functor $q^{*}$ reflects isomorphisms, the category is said to be \emph{Bourn-protomodular} \cite{Bourn1991}. If $\C$ is pointed, this is equivalent to the same condition restricted to morphisms whose domain is the zero object. Note that the composition of the change-of-base functor $\gnab_B^{*} \colon \Pt_B(\C) \to \Pt_0(\C)$, where $\gnab_B \colon 0\to B$ is the unique morphism from the zero object to $B$, with the isomorphism $\Pt_0(\C) \cong \C$ is the kernel functor $\Ker_B$ sending each point $(A,p,s)$ to the kernel of $p$. Therefore, a finitely complete pointed category is Bourn-protomodular if and only if for each $B$ in $\C$ the kernel functor reflects isomorphisms, or equivalently, the split short five lemma holds. Recall that a pointed protomodular category which is Barr-exact and has finite coproducts is called a \emph{semi-abelian} category \cite{Janelidze-Marki-Tholen}.

If the change-of-base functor $q^{*}$ has a right adjoint for each morphism $q$, then~$\C$ is said to be \emph{locally algebraically cartesian closed} (\LACC{} for short) \cite{Bourn-Gray, GrayPhD, Gray2012}. 
If~$\C$ is also protomodular and pointed, then by \cite[Theorem 5.1]{Gray2012} it is sufficient to check if for each $B$ in $\C$ the kernel functor $\Ker_B \colon \Pt_B(\C) \to \C$ has a right adjoint. 

If~$\C$ is a \LACC, semi-abelian category, it implies several categorical-algebraic properties are satisfied by $\C$, such as \emph{peri-abelianness}~\cite{Bourn-Peri}, \emph{strong protomodularity}~\cite{B1}, the \emph{Smith is Huq} condition~\cite{MFVdL}, \emph{normality of Higgins commutators}~\cite{CGrayVdL1}, and \emph{algebraic coherence}. 

As we can see, \LACC{} is a strong condition where the short list of known semi-abelian examples includes groups, Lie algebras, crossed modules, cocommutative Hopf algebras over a field and all abelian categories. 
In fact, it was shown in~\cite{GM-Vdl-2} that \LACC{} characterizes Lie algebras amongst all varieties of non-associative algebras over an infinite field of characteristic not equal to two. Furthermore, for an infinite field of characteristic two there are exactly two non-abelian \LACC{} sub-varieties: Lie algebras and quasi-Lie algebras; that is the variety of algebras obtained from Lie algebras by replacing the identity $xx=0$ by $xy=-yx$.

It is not known whether there is any relation between \LACC\ and \emph{action representability}~\cite{BJK2}. They coincide in non-associative algebras over an infinite field (they both characterize Lie algebras~\cite{GTVV}), but it is not longer true over finite fields, since Boolean algebras are action representable but not \LACC~\cite{BJK2, Gray2012}.

The aim of the present paper is to show that, under certain conditions, the category of Lie algebras in a monoidal category is \LACC. In doing so, we will add the categories of Lie superalgebras, Lie colour algebras and differentially graded Lie algebras, amongst others, to the brief list of known \LACC{} examples. Another interesting example, is the category of Lie algebras in the Loday-Pirashvili category~\cite{Loday-Pirashvili}. In this way, we will show that the category of Leibniz algebras (which is not \LACC~\cite{GM-Vdl}) is a full reflective subcategory of a \LACC{} category.

Let us briefly explain our approach, which is motivated by \cite{GrayLie} where it is shown that the category of Lie algebras over a commutative ring is \LACC. Let $\Cb$ be a symmetric monoidal closed category with underlying category additive and small complete. 
We show that for an object $B$ in $\LieC$, the category of internal Lie algebras in $\Cb$, the category $\Pt_B(\LieC)$ is equivalent to the category $\Lie(\Db)$ of internal Lie algebras in the symmetric monoidal category $\Db$ with underlying category the category of \emph{internal monoid actions} of $U(B)$, the \emph{universal enveloping monoid} of~$B$, acting on the objects of $\Cb$, and with monoidal structure induced by $\Cb$ together with a natural comonoid structure on~$U(B)$. Moreover, we show that the kernel functor $\Ker_B \colon \Pt_B(\LieC)\to \LieC$ factors, via this isomorphism, through a forgetful functor $V\colon\Lie(\Db) \to \LieC$. This reduces the problem of finding a right adjoint of $\Ker_B$ to finding a right adjoint of $V$, which can be found by an adjoint lifting theorem.

The paper is organized as follows: In Section~\ref{algebras} after recalling the definition of various relevant types of internal algebras we construct adjoint functors between certain categories of such internal algebras, which we will use in Section~\ref{actions}. In Section~\ref{monoidal functors} we recall the necessary background and then prove the adjoint lifting theorem which we use to produce the right adjoint of the functor $V$ mentioned above. The adjoint functors previously defined are used in Section~\ref{actions} to produce the symmetric monoidal category $\Db$ as well as the equivalence of categories between~$\Pt_B(\LieC)$ and  $\Lie(\Db)$ mentioned before. Finally, in Section~\ref{examples} we study some applications of the obtained result.
	
\section{Internal algebras}\label{algebras}

Throughout the rest of paper we denote by $\Cb = (\C, \t, I, \az, \lz, \rz, \sz)$ a \emph{symmetric monoidal category}. We refer to \cite{MacLane} for basic knowledge about this topic. 
Recall that~$\Cb$ is said to be \emph{closed} if for each object $X$ in $\C$ the endofunctor $X \t - \colon \C\to \C$ is a left adjoint. A chosen right adjoint will be denoted by $(-)^{X}$ and the counit by~$\ev$. 
As the main non-cartesian example we have the symmetric monoidal category $\Vect_{\K}$ of vector spaces with the tensor product $\t_{\K}$.
The canonical isomorphism between $\Hom_{\Vect_{\K}}(X \t_{\K} Z, Y)$ and $\Hom_{\Vect_{\K}}(Z, \Hom_{\Vect_{\K}}(X, Y))$ gives us the right adjoint~$(-)^{X} = \Hom_{\Vect_{\K}}(X, -)$. In this particular case, the counit of the adjunction $\ev_Y \colon Y^{X} \t X \to Y$ maps $f \t x$ to the evaluation $f(x)$.

The setting we need in this manuscript is the following: $\Cb$ is a symmetric monoidal closed category with underlying category $\C$ additive and cocomplete. 

Recall that a \emph{monoid} $(A, m, u)$ in $\Cb$ is a triple, where $A$ is an object of $\C$, $m \colon A \t A \to A$ and $u \colon I \to A$ are morphisms of $\C$, such that the diagrams
\[
\xymatrix{
A \t (A \t A) \ar[rr]^-{\az} \ar[d]_-{\id \t m} &&  (A \t A) \t A \ar[d]^-{m\t \id}   & A \t I \ar[r]^-{\id \t u} \ar[rd]_-{\rz} & A \t A \ar[d]^-{m} & I \t A \ar[l]_-{u \t \id} \ar[ld]^-{\lz}  \\
A \t A \ar[r]^-{m} & A & A\t A \ar[l]_-{m} & &A &
}
\]
commute \cite{MacLane}. 
If in addition the diagram
\[
\xymatrix{
A \t A \ar[rr]^-{\sz} \ar[rd]_-{m} & & A \t A \ar[dl]^-{m} \\
& A &
}
\]
commutes, then it is said to be a \emph{commutative monoid}. The categories of such objects will be denoted by $\MonC$ and $\cMonC$, respectively. Recall that since~$\Cb$ is a symmetric monoidal category, given two monoids $(A,m,u)$ and $(A',m',u')$ in $\Cb$ the triple $(A\t A', (m\t m') i, (u\t u')\lz)$, where $i$ is the \emph{middle interchange isomorphism}, is a monoid which we denote by $(A,m,u)\t (A',m',u')$ (which is commutative whenever both $(A,m,u)$ and $(A',m',u')$ are). Moreover, with this definition of tensor product the symmetric monoidal structure of $\Cb$ lifts to both~$\MonC$ and~$\cMonC$. Let us write $\Monu{\Cb}$ and $\cMonu{\Cb}$ for the respective symmetric monoidal categories.
The duals of the notions of monoid and commutative monoid are \emph{comonoid} and \emph{cocommutative comonoid}, respectively, and the categories of such objects are denoted $\CoMonC$ and $\cCoMonC$, respectively. By duality the symmetric monoidal structure on $\Cb$ lifts to both $\CoMonC$ and $\cCoMonC$, producing the symmetric monoidal categories $\CoMonu{\Cb}$ and $\cCoMonu{\Cb}$, respectively.

A \emph{bimonoid} can be defined as a comonoid in the symmetric monoidal category~$\MonC$ (or equivalently as a monoid in~$\CoMonC$). We present a bimonoid as a quintuple $(A, m, u, d, e)$ where $(A, m, u)$ is an monoid, $(A, d, e)$ is a comonoid and $d\colon (A,m,e) \to (A,m,u)\t (A,m,u)$ and~$e\colon (A,m,u) \to (I,\lz,\id)$ are monoid morphisms, or equivalently $m$ and $u$ are comonoid morphisms. The category of bimonoids will be denoted by $\BiMonC$.
A bimonoid is called \emph{commutative} if the monoid part is commutative, and it is called \emph{cocommutative} if the comonoid part is cocommutative.

Under the conditions assumed on $\Cb$ it is well known that free monoids exist, in fact much milder assumptions are needed (see e.g.~\cite{Lack:free_monoids} and the references therein).  
\begin{proposition}\label{prop:free_monoid}
The forgetful functor $G\colon\MonC \to \C$ has a left adjoint which we will denote by $F$.
\end{proposition}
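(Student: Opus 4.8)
The plan is to construct the left adjoint $F$ explicitly as the \emph{tensor algebra}: for an object $X$ of $\C$ set
\[
FX \;=\; \coprod_{n\in\N} X^{\t n},
\]
where $X^{\t 0} = I$ and $X^{\t (n+1)} = X\t X^{\t n}$, with unit $u\colon I = X^{\t 0}\to FX$ the zeroth coprojection, and with multiplication given by ``concatenation of tensors''. To make sense of the multiplication, the first step is to record that, since $\Cb$ is closed, the endofunctor $X\t -$ is a left adjoint and hence preserves all colimits that exist in $\C$; by symmetry $-\t X$ does too. Combined with cocompleteness of $\C$, this shows that $-\t-$ distributes over the countable coproducts defining $FX$ in each variable, so that the canonical comparison
\[
FX\t FX \;\cong\; \coprod_{p,q\in\N} X^{\t p}\t X^{\t q} \;\cong\; \coprod_{p,q\in\N} X^{\t(p+q)}
\]
is an isomorphism, the second identification being assembled from the associativity constraint $\az$ via Mac Lane's coherence theorem. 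One then defines $m\colon FX\t FX\to FX$ to be the morphism whose $(p,q)$-component is the $(p+q)$-th coprojection $X^{\t(p+q)}\to FX$.

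Second, I would check that $(FX, m, u)$ is a monoid. Associativity and the unit laws, after unwinding the coproduct decompositions using the distributivity isomorphisms above, reduce to coherence identities among iterated associativity and unit constraints of $\Cb$, so they hold by the coherence theorem. (The additive structure on $\C$ plays no role here: only cocompleteness and closedness are used.)

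Third, for the universal property, given a monoid $(A,m_A,u_A)$ and a morphism $f\colon X\to GA$ in $\C$, define $\hat f\colon FX\to A$ to be the morphism whose $n$-th component is the composite
\[
X^{\t n}\xrightarrow{\ f^{\t n}\ } A^{\t n}\xrightarrow{\ m_A^{(n)}\ } A,
\]
where $m_A^{(0)} = u_A$, $m_A^{(1)} = \id_A$, and $m_A^{(n)}$ for $n\geq 2$ is any bracketing of $n-1$ copies of $m_A$ (well defined by associativity of $A$). A diagram chase — once more reducing to coherence together with associativity and unitality of $m_A$ — shows that $\hat f$ is a monoid morphism, that $\hat f\circ\eta_X = f$ for the first coprojection $\eta_X\colon X\to FX$, and that $\hat f$ is the unique monoid morphism with this property (one inducts on the components, using that any monoid morphism out of $FX$ is determined on $X^{\t 0}$ and $X^{\t 1}$ and must be compatible with $m$). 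This yields a bijection $\Hom_{\MonC}(FX,A)\cong\Hom_\C(X,GA)$, natural in $X$ and $A$, whence $F\dashv G$.

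The main obstacle is not conceptual but organisational: one must be disciplined about the coherence isomorphisms identifying $X^{\t p}\t X^{\t q}$ with $X^{\t(p+q)}$ and about their interaction with $\az$, $\lz$, $\rz$ and the coproduct-distributivity isomorphisms, so that associativity of $m$ and the uniqueness clause of the universal property are genuinely consequences of Mac Lane's coherence theorem rather than ad hoc computations. Alternatively, one could bypass the explicit construction entirely and invoke the general existence results for free monoids in sufficiently cocomplete monoidal categories cited above, e.g.\ \cite{Lack:free_monoids}, of which the present hypotheses are a very special case.
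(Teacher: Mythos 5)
Your construction is correct. The one point worth noting is that the paper does not actually prove this proposition at all: it simply observes that free monoids are well known to exist under much milder hypotheses and cites Lack's \emph{Note on the construction of free monoids}. Your explicit tensor-algebra construction $FX=\coprod_{n\in\N}X^{\t n}$ is the classical argument that works precisely because the paper's standing hypotheses are strong --- closedness of $\Cb$ gives that $X\t-$ (and, by symmetry, $-\t X$) preserves all colimits, so the tensor distributes over the countable coproduct and the concatenation multiplication, unit, and universal property all go through as you describe; the uniqueness clause is correctly reduced to the fact that the $n$-th coprojection is $m^{(n)}$ applied to $n$ copies of the first one. What your route buys is self-containedness and an explicit description of the free monoid (which is implicitly used later: the paper writes $F(B)=(T(B),m_B,u_B)$, i.e.\ the tensor algebra); what the paper's citation buys is generality, since Lack's transfinite construction applies when $\t$ preserves only certain colimits, a strength that is not needed here. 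Your closing remark that one could instead invoke \cite{Lack:free_monoids} is exactly what the paper does.
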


A \emph{Lie algebra} in $\Cb$ is a pair $(X, b)$ where $X$ is an object and  $b \colon X \t X \to X$ is a morphism in $\C$  making the diagrams
\begin{equation}
\label{dia:lie}
\vcenter{
\xymatrix{
X\t X \ar[d]_-{0} \ar[r]^-{\id + \sz} & X\t X \ar[dl]^-{b} & X\t (X\t X) \ar[rr]^-{\id + \sz\az + (\sz\az)^2} \ar[d]_-{0} && X\t (X\t X)\ar[d]^-{\id \t b}\\
X  & & X && X\t X\ar[ll]^-{b}
}
}
\end{equation}
commute.
Note that we can sum morphisms since we are assuming that $\C$ is additive. The category of Lie algebras in $\Cb$ will be denoted by $\LieC$.
As expected, when $\Cb$ is the category of vector spaces (with the usual monoidal structure) Lie algebras in $\Cb$ are usual Lie algebras.

Throughout this paper we will denote the direct sum of $A$ and $B$ by $A\oplus B$ with projections $\pi_1\colon A\oplus B\to A$ and $\pi_2\colon A\oplus B\to B$ and inclusions $\iota_1\colon A\to A\oplus B$ and $\iota_2\colon B\to B\oplus A$. For morphisms $f\colon W\to A$, $g\colon W\to B$ and $h\colon A\to C$ and~$i\colon B\to C$ we will denote by $\langle f,g \rangle \colon W\to A\oplus B$ and $[h,i]\colon A\oplus B\to C$ the unique morphisms with $\pi_1 \langle f,g\rangle =f$, $\pi_2 \langle f,g\rangle =g$, $[h,i]\iota_1=h$ and $[h,i]\iota_2=i$.

Given a monoid $(A,m,u)$ in $\Cb$, the pair $(A,m(\id-\sigma))$ is an object in $\LieC$ and this assignment determines the object map of a functor $L \colon \MonC \to \LieC$ which is the identity on morphisms (see e.g.~\cite{GrayLie}).
The following proposition is probably known, but we couldn't find a reference so we included a proof.
\begin{proposition}
The functor $L \colon \MonC \to \LieC$ has a left adjoint, which we denote by $U$.
\end{proposition}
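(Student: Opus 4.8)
The plan is to construct a universal enveloping monoid $U(X)$ for each Lie algebra $(X,b)$ in $\Cb$ by generators and relations, exactly as one does over a ring, and to verify the universal property of the unit directly; by the pointwise criterion for the existence of an adjoint it then suffices to carry this out for a single but arbitrary object of $\LieC$. Write $G\colon\MonC\to\C$ for the forgetful functor, $F$ for its left adjoint from Proposition~\ref{prop:free_monoid}, and $j_{Y}\colon Y\to GF(Y)$ for the unit of $F\dashv G$. The only input that is not formal is that $\MonC$ has coequalizers (equivalently, is cocomplete), and that such coequalizers are epimorphisms: this is classical for monoids in a cocomplete monoidal category whose tensor product preserves colimits in each variable, which is our situation since $\Cb$ is closed (see \cite{Lack:free_monoids} and the references therein). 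For the single coequalizer we use, one may alternatively pass to a reflexive pair with the same coequalizer and observe that $G$ creates it.

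Given $(X,b)$ in $\LieC$, I would form the two morphisms of $\C$
\[
a_{1} = j_{X}b\colon X\t X\to GF(X),\qquad a_{2} = m_{F(X)}(j_{X}\t j_{X})(\id-\sigma)\colon X\t X\to GF(X),
\]
where $m_{F(X)}$ is the multiplication of $F(X)$ and $\id-\sigma$ is available by additivity of $\C$. Let $\bar a_{1},\bar a_{2}\colon F(X\t X)\to F(X)$ be their transposes along $F\dashv G$, so that $G(\bar a_{i})j_{X\t X}=a_{i}$, and let $q\colon F(X)\to U(X)$ be their coequalizer in $\MonC$; set $\eta_{X}=G(q)j_{X}\colon X\to G(U(X))$. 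Since $q$ is a morphism of monoids we have $G(q)m_{F(X)}=m_{U(X)}(G(q)\t G(q))$, and since $q$ coequalizes $\bar a_{1}$ and $\bar a_{2}$, applying $G$ and precomposing with $j_{X\t X}$ gives $G(q)a_{1}=G(q)a_{2}$. Combining these two identities with the naturality of $\sigma$ yields $\eta_{X}b=m_{U(X)}(\id-\sigma)(\eta_{X}\t\eta_{X})$, so that $\eta_{X}$ is indeed a morphism $(X,b)\to L(U(X))$ in $\LieC$.

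For the universal property, let $A=(A,m,u)$ be a monoid and $f\colon(X,b)\to L(A)$ a morphism of Lie algebras, i.e.\ $f\colon X\to G(A)$ with $fb=m(\id-\sigma)(f\t f)$. Let $g\colon F(X)\to A$ be the monoid morphism with $G(g)j_{X}=f$. Using that $g$ is a monoid morphism, the naturality of $\sigma$ and the identity satisfied by $f$, one checks $G(g)a_{1}=fb=G(g)a_{2}$, whence $g\bar a_{1}=g\bar a_{2}$, since a monoid morphism out of $F(X\t X)$ is determined by its restriction to the generators. The universal property of $q$ then produces a unique monoid morphism $\bar f\colon U(X)\to A$ with $\bar f q=g$, and $L(\bar f)\eta_{X}=G(\bar f)G(q)j_{X}=G(g)j_{X}=f$; uniqueness of $\bar f$ follows because $q$ is epic and $g$ is the unique transpose of $f$ along $F\dashv G$. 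Thus $(U(X),\eta_{X})$ is an initial object of the comma category $(X,b)\downarrow L$ for every $(X,b)$, so $L$ has a left adjoint $U$. I expect the cocompleteness — really, just the existence of coequalizers — of $\MonC$ to be the only non-routine point; everything else is a diagram chase with $F\dashv G$ and the symmetry, and one can verify along the way that for $\Cb=\Vect_{\K}$ this recovers the classical universal enveloping algebra.
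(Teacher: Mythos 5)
Your proof is correct and follows essentially the same route as the paper: $U(X)$ is obtained as the coequalizer in $\MonC$ of the two monoid morphisms $F(X\t X)\to F(X)$ transposed from $j_X b$ and $m_{F(X)}(j_X\t j_X)(\id-\sigma)$, with the unit $G(q)j_X$ and the universal property read off from $F\dashv G$ together with the universal property of the coequalizer. The only divergence is in securing the existence of that coequalizer: where you appeal to cocompleteness of $\MonC$, the paper replaces the pair by the reflexive pair $F(X\oplus X^{\t 2})\rightrightarrows F(X)$ (both legs restricting to the unit on the first summand), so that only reflexive coequalizers, created by the forgetful functor, are needed --- precisely the alternative you mention in passing.
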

\begin{proof}
Let us write $\eta$ for the unit of the adjunction $F \dashv G$ from Proposition~\ref{prop:free_monoid}.
Let $(B,b)$ be an object in $\LieC$ and $(A,m,u)$ an object in $\MonC$. For a morphism $f\colon B\to A$ in $\C$  let $\bar f$ be the corresponding monoid morphism from $F(B)=(T(B),m_B,u_B)$ to $(A,m,u)$ obtained via the adjunction $F \dashv G$, that is the unique monoid morphism such that $G(\bar f)\eta =f$. By considering the diagram 
\[
\xymatrix{
B^{\t 2}\ar[rr]^-{b} \ar[rd]^-{f ^{\t 2}} \ar[d]_-{\id - \sz} \ar@{}[ddrrr]|(0.55)*+[F]{1} && B\ar[dr]^-{\eta} \ar[dd]_-{f}&\\
B^{\t 2} \ar[rd]^-{f^{\t 2}}\ar[dd]_-{\eta \t \eta} &A^{\t 2}\ar[d]^-{\id -\sz} && T(B)\ar[dl]^-{\bar f}\\
 &A^{\t 2}\ar[r]^-{m} & A &\\
T(B)^{\t 2} \ar[ru]^-{\bar f ^{\t 2}} \ar[r]_-{m_B} &T(B)\ar[ru]_-{\bar f}&
}
\]
ones sees that the sub-diagram $\xybox{(0,0.2)*+[F]{1}}$ commutes if and only if the outer arrows commute.  This implies that $f$ is a morphism from $(B,b)$ to $L(A,m,u)$ in $\LieC$ if and only if the diagram
\[
\xymatrix@C=10ex{
B^{\t 2} \ar@<0.5ex>[r]^-{\eta b}\ar@<-0.5ex>[r]_-{m_B(\eta\t \eta)(\id-\sz)} & T(B) \ar[r]^-{\bar f} & A
}
\]
is a fork in $\C$. However, the previous diagram is a fork if and only if the diagram
\[
\xymatrix@C=10ex{
B\oplus B^{\t 2} \ar@<0.5ex>[r]^-{[\eta,\eta b]}\ar@<-0.5ex>[r]_-{[\eta,m_B(\eta\t \eta)(\id-\sz)]} & T(B) \ar[r]^-{\bar f} & A
}
\]
is a fork.
Therefore, writing $r, s\colon F(B\oplus B^{\t2})\to F(B)$ for the corresponding monoid morphisms induced by $[\eta,\eta b]$ and $[\eta,m_B(\eta\t \eta)(\id -\sz)]$, respectively, we see that previous diagram is a fork if and only if the diagram
\[
\xymatrix@C=10ex{
F(B\oplus B^{\t2})\ar@<0.5ex>[r]^-{r}\ar@<-0.5ex>[r]_-{s} & F(B) \ar[r]^-{\bar f} & (A,m,u)
}
\]
is a fork in $\MonC$. Since the tensor $\t$ preserves coequalizers in each argument, it is well-known that it preserves reflexive coequalizers, and the forgetful functor~${\MonC\to \C}$ creates reflexive coequalizers. Noting that the parallel pair of morphisms in the previous diagram is reflexive it easily follows that if~$q\colon F(B)\to U(B,b)$ is the coequalizer of $r$ and $s$ in $\MonC$, then $U(B,b)$ together with the morphism $q\eta$ from $(B,b)$ to $L(U(B,b))$ in $\LieC$ is the universal morphism from $(B,b)$ to $L$.
\end{proof}
Next we show that for any $B$ in $\LieC$ the monoid $U(B)$ is the underlying monoid of a cocommutative bimonoid (in fact of a cocommutative Hopf monoid)~$\widetilde U(B)$, and the assignment $B \mapsto \widetilde U(B)$ is the object map of a functor which is left adjoint of a functor $P$ which we construct below. This functor $P$ generalizes the construction of the primitive Lie algebra of a bialgebra.
 
Recall, as mentioned above, that since $\Cb$ is symmetric monoidal the monoidal structure lifts to $\MonC$.
\begin{proposition}
For each monoid $A=(A,m,u)$ the morphism $\delta_A \colon A\to A\t A$ defined by $\delta_A = (u\t \id)\lz^{-1} + (\id\t u)\rz^{-1}$ is a morphism from $L(A)$ to $L(A\t A)$ in~$\LieC$, natural in $A$.
\end{proposition}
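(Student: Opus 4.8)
The plan is to check directly the defining equation of a morphism in $\LieC$: with $b_A = m(\id - \sigma)$ the bracket of $L(A)$ and $b_{A\t A} = m_{A\t A}(\id - \sigma)$ the bracket of $L(A\t A)$, where $m_{A\t A} = (m\t m)i$ is the multiplication of the tensor product monoid $(A,m,u)\t (A,m,u)$, one must verify $\delta_A\, b_A = b_{A\t A}(\delta_A\t\delta_A)$; naturality in $A$ will be a short separate argument. Write $\ell_1 = (\id\t u)\rho^{-1}$ and $\ell_2 = (u\t\id)\lambda^{-1}$, the two canonical morphisms $A\to A\t A$ (the unit insertions into a tensor product of monoids), so that $\delta_A = \ell_1 + \ell_2$ by definition. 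The first point is that $\ell_1$ and $\ell_2$ are monoid morphisms from $(A,m,u)$ to $(A,m,u)\t (A,m,u)$; this is routine from the monoid unit axioms, naturality of $\lambda$ and $\rho$, and coherence. Since $L$ is the identity on morphisms, $\ell_1$ and $\ell_2$ are thus also morphisms in $\LieC$, that is $\ell_i\, b_A = b_{A\t A}(\ell_i\t\ell_i)$ for $i = 1, 2$.

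Next, using that $\C$ is additive and $X\t -$ is a left adjoint for every $X$, so that $\t$ is additive in each variable, one expands
\[ b_{A\t A}(\delta_A\t\delta_A) = \sum_{i,j\in\{1,2\}} m_{A\t A}(\id - \sigma)(\ell_i\t\ell_j). \]
The two ``diagonal'' summands, with $i = j$, are by the previous paragraph equal to $\ell_1\, b_A$ and $\ell_2\, b_A$, whose sum is $\delta_A\, b_A$; so the whole claim reduces to showing that the two ``cross'' summands vanish. To this end I would establish the coherence identities $m_{A\t A}(\ell_1\t\ell_2) = \id_{A\t A}$ and $m_{A\t A}(\ell_2\t\ell_1) = \sigma$: each follows by using naturality of the middle interchange $i$ to move the unit insertions to the outside, then applying the unit axioms $m(\id\t u) = \rho$ and $m(u\t\id) = \lambda$, after which what remains is a composite of structure isomorphisms of $\Cb$ and is hence the expected map by the coherence theorem for symmetric monoidal categories. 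Granting this, naturality of $\sigma$ gives $\sigma(\ell_1\t\ell_2) = (\ell_2\t\ell_1)\sigma$ and $\sigma(\ell_2\t\ell_1) = (\ell_1\t\ell_2)\sigma$, so that $m_{A\t A}\sigma(\ell_1\t\ell_2) = \sigma\sigma = \id_{A\t A}$ and $m_{A\t A}\sigma(\ell_2\t\ell_1) = \id_{A\t A}\sigma = \sigma$; subtracting shows both cross terms are zero. (Conceptually: the two copies $\ell_1(A)$, $\ell_2(A)$ of $A$ inside the monoid $A\t A$ commute, so their mutual bracket vanishes — this is the only genuinely computational input.) Hence $b_{A\t A}(\delta_A\t\delta_A) = \ell_1\, b_A + \ell_2\, b_A = \delta_A\, b_A$, as required.

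It remains to prove naturality: for a morphism $f\colon A\to A'$ in $\MonC$ one must check $\delta_{A'} f = (f\t f)\delta_A$ in $\C$ (again using that $L$ is the identity on morphisms), and, expanding both sides into their two summands, this follows termwise from naturality of $\lambda^{-1}$ and $\rho^{-1}$ together with $f u = u'$. The genuine obstacle is the pair of coherence identities $m_{A\t A}(\ell_1\t\ell_2) = \id_{A\t A}$ and $m_{A\t A}(\ell_2\t\ell_1) = \sigma$, where one must keep careful track of the middle interchange and the unit coherence isomorphisms; everything else — additivity of $\t$, $L$ acting as the identity on morphisms, and naturality of $\sigma$, $\lambda$, $\rho$ — is formal.
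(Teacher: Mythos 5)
Your proposal is correct and follows essentially the same route as the paper: decompose $\delta_A$ into the two unit insertions, use that these are monoid (hence Lie) morphisms to handle the diagonal terms of the biadditive expansion, and show the two cross terms vanish because the two embedded copies of $A$ commute in $A\t A$ (your identities $m_{A\t A}(\ell_1\t\ell_2)=\id$ and $m_{A\t A}(\ell_2\t\ell_1)=\sz$ are exactly the paper's computation $(m\t m)i(l\t r)=\sz=(m\t m)i\sz(l\t r)$, just organised via naturality of $\sz$ rather than two direct evaluations). The naturality argument also matches the paper's.
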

\begin{proof}
Let $A=(A,m,u)$ be a monoid in $\Cb$ and let $\delta_A \colon A\to A\t A$ be the morphism~$\delta_A= l_A + r_A$ where $l_A=(u\t \id) \lz^{-1}$ and $r_A=(\id \t u)\rz^{-1}$. The naturality of~$\delta$ easily follows from the naturality of $l$ and $r$.
Therefore, it only remains to show that $\delta_A$ is a morphism from $L(A)$ to $L(A\t A)$. We have
\begin{align*}
(m\t m) i (l\t r) &= (m\t m) i ((u\t \id)\t (\id \t u) (\lz^{-1} \t \rz^{-1}) \\
&= (m\t m) ((u\t \id) \t (\id \t u)) i (\lz^{-1}\t \rz^{-1})\\
& = (\lz \t \rz) i (\lz^{-1}\t \rz^{-1})\\
&= \sz
\end{align*}
and 
\begin{align*}
(m\t m) i\sz (l\t r) &= (m\t m) i\sz ((u\t \id)\t (\id \t u) (\lz^{-1} \t \rz^{-1}) \\
&= (m\t m) ((\id \t u) \t (u \t \id)) i\sz (\lz^{-1}\t \rz^{-1})\\
& = (\rz \t \lz) i\sz (\lz^{-1}\t \rz^{-1})\\
&= \sz
\end{align*}
and hence
\[
(m\t m) i (\id -\sz) (l \t r)  =0
\]
and
\[
(m \t m) i (\id -\sz)(r\t l) =(m \t m) i (\sz -\id)\sz(r\t l)=-(m \t m) i (\id -\sz)(l\t r)\sz=0.
\]
On the other hand it is easy to check that $l_A$ and $r_A$ are monoid morphisms from $A$ to $A\t A$ and hence they are morphisms from $L(A)$ to $L(A\t A)$ in $\LieC$. Therefore
\begin{align*}
(m\t m) i (\id-\sz) (\delta_A\t \delta_A) &= (m\t m) i (\id-\sz) (l_A\t l_A)+(m\t m) i (\id-\sz)(l_A\t r_A)\\
&\hspace{-0.03cm}+ (m\t m) i (\id-\sz)(r_A\t l_A) + (m\t m) i (\id-\sz)(r_A\t r_A)\\
&= l_Am(\id-\sz)+0+0+r_Am(\id-\sz)\\
&=\delta_A m(\id-\sz)
\end{align*}
as desired.
\end{proof}
Recall that a bimonoid $(A,m,u,d,e)$ is called a \emph{Hopf monoid} if there is a morphism $s\colon A\to A$ making the diagram
\[
\xymatrix@C=3ex@R=4ex{
&&A\t A \ar[rr]^-{\id \t s} && A\t A\ar[drr]^-{m}\\
A \ar[drr]_-{d}\ar[urr]^-{d}\ar[rrr]^-{e} &&& I \ar[rrr]^-{u} &&& A \\
&&A\t A \ar[rr]^-{s \t \id} && A\t A\ar[urr]_-{m} 
}
\]
commute. Recall also that such a morphism $s$ is unique (whenever it exists) and is called the \emph{antipode} of the Hopf monoid $(A,m,u,d,e)$.
Note that Goyvaerts and Vercruysse have studied a generalization of the functor $P$, below,  in \cite{Goyvaerts_Vercruysse}.
\begin{proposition}
Let $H$ be the forgetful functor from $\BiMonC\to \MonC$.
For a  bimonoid $A$, with comultiplication $d_A\colon A\to A\t A$, the morphism denoted by~${j_A\colon P(A) \to L(H(A))}$ forming part of the equalizer diagram
\[
\xymatrix{
P(A) \ar[r]^-{j_A} & L(H(A)) \ar@<0.5ex>[r]^-{\delta_{H(A)}}\ar@<-0.5ex>[r]_-{L(d_A)} & L(H(A)\t H(A)),
}
\]
is the component of a natural transformation from a functor~$P\colon\BiMonC \to \LieC$ to the functor $LH$. Moreover, the functor $P$ is part of an adjunction with left adjoint $\widetilde{U}$ and unit $\widetilde{\nu}$ such that: $H\widetilde U=U$ and $(j\circ \widetilde U) \widetilde \nu=\nu$ where $\nu$ is the unit of the adjunction $U \dashv L$. Furthermore, for each object $X$ in $\LieC$ the bimonoid~$\widetilde U(X)$ is a cocommutative Hopf monoid.
\end{proposition}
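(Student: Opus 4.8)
The plan is to derive everything from the universal property of the unit $\nu$ of the adjunction $U\dashv L$, using repeatedly that $L$ is faithful (it is the identity on morphisms) and that, by the preceding proposition, $\delta_M\colon L(M)\to L(M\t M)$ is a morphism in $\LieC$ natural in the monoid $M$. First, to see that $P$ is a functor: given a morphism $f$ in $\BiMonC$, the morphism $L(H(f))\circ j_A$ equalises $\delta_{H(A')}$ and $L(d_{A'})$ — this combines naturality of $\delta$, the equalising property of $j_A$, and the fact that $f$, being a bimonoid morphism, is a comonoid morphism — so it factors uniquely through $j_{A'}$; this defines $P(f)$, and both functoriality of $P$ and naturality of $j$ are then forced by the uniqueness of factorisations through $j$.

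Next I would construct $\widetilde U$. For $X$ in $\LieC$, the composite $\delta_{U(X)}\circ\nu_X\colon X\to L(U(X)\t U(X))$ and the zero morphism $X\to L(I)$ (note that $L(I)$ has zero bracket) are morphisms in $\LieC$, so the universal property of $\nu_X$ produces unique monoid morphisms $d_X\colon U(X)\to U(X)\t U(X)$ and $e_X\colon U(X)\to I$ with $L(d_X)\circ\nu_X=\delta_{U(X)}\circ\nu_X$ and $L(e_X)\circ\nu_X=0$. That $\widetilde U(X):=(U(X),m,u,d_X,e_X)$ is a bimonoid then reduces to the comonoid axioms, and since both sides of each are monoid morphisms out of $U(X)$ it is enough to check them after precomposing with $\nu_X$ and applying $L$; this turns coassociativity and counitality into coherence identities for $\delta=(u\t\id)\lz^{-1}+(\id\t u)\rz^{-1}$, which are routine. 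Setting $\widetilde U(f):=U(f)$ on morphisms, the same uniqueness argument (using naturality of $\delta$ and of $\nu$) shows $U(f)$ is a comonoid, hence a bimonoid, morphism; in particular $H\widetilde U=U$.

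For the adjunction $\widetilde U\dashv P$: by the defining universal property of the equaliser $P(A)$, a morphism $X\to P(A)$ in $\LieC$ is the same as a morphism $g\colon X\to L(H(A))$ with $\delta_{H(A)}\circ g=L(d_A)\circ g$. Writing $g=L(\hat g)\circ\nu_X$ for the associated monoid morphism $\hat g\colon U(X)\to H(A)$ and using naturality of $\delta$ together with $L(d_X)\circ\nu_X=\delta_{U(X)}\circ\nu_X$, this equation becomes (again via uniqueness in $U\dashv L$) the identity $(\hat g\t\hat g)\circ d_X=d_A\circ\hat g$; moreover $e_A\circ\hat g=e_X$ is automatic, for applying $L(\lz\circ(e_A\t\id))$ to $\delta_{H(A)}\circ g=L(d_A)\circ g$ and invoking the counit axiom of $A$ gives $u_{H(A)}\circ e_A\circ g=0$, whence $e_A\circ g=0$ since $e_A\circ u_{H(A)}=\id_I$ makes $u_{H(A)}$ split monic. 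Thus morphisms $X\to P(A)$ in $\LieC$ correspond bijectively, and naturally, to monoid morphisms $U(X)\to H(A)$ that are also comonoid morphisms, i.e.\ to morphisms $\widetilde U(X)\to A$ in $\BiMonC$; passing $\id_{\widetilde U(X)}$ through this bijection gives the unit $\widetilde\nu_X$, and unwinding the construction yields $j_{\widetilde U(X)}\circ\widetilde\nu_X=\nu_X$, that is $(j\circ\widetilde U)\widetilde\nu=\nu$. Cocommutativity of $\widetilde U(X)$ is again a uniqueness argument: $\sz\circ\delta_{U(X)}=\delta_{U(X)}$ by coherence (the symmetry exchanges the two summands of $\delta$), so $\sz\circ d_X=d_X$.

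Finally, the antipode, which I expect to be the main obstacle. I would set $s_X\colon U(X)\to U(X)$ to be the underlying morphism of the unique monoid morphism $U(X)\to U(X)^{\op}$ extending $-\nu_X$ — this makes sense because $-\id_M\colon L(M)\to L(M^{\op})$ is a morphism in $\LieC$ for every monoid $M$ (the bracket of $L(M^{\op})$ is that of $L(M)$ negated) — and then I must verify the antipode identities $m\circ(\id\t s_X)\circ d_X=u\circ e_X=m\circ(s_X\t\id)\circ d_X$. The obstruction is that, in contrast with all the earlier steps, these two outer composites are not monoid morphisms, so the universal property of $\nu_X$ does not apply to them directly. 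My approach would be descent from the free case: the free monoid $F(X)$, with underlying object $\bigoplus_{n\ge 0}X^{\t n}$, carries a natural cocommutative Hopf monoid structure whose comultiplication and counit are defined exactly as above and whose antipode is built from this grading by the convolution series $\sum_{k\ge 0}(-1)^k(\id-u\epsilon)^{*k}$ — a sum that terminates on each graded piece, $\t$ preserving the relevant coproducts because $\C$ is cocomplete and $\Cb$ is closed. Since the regular epimorphism $q\colon F(X)\to U(X)$ defining $U(X)$ is a bimonoid morphism onto $\widetilde U(X)$, the antipode of $F(X)$ descends along $q$ to an antipode for $\widetilde U(X)$; together with the cocommutativity established above, this shows $\widetilde U(X)$ is a cocommutative Hopf monoid. (A result general enough to cover this last point can alternatively be extracted from the work of Goyvaerts and Vercruysse mentioned above.)
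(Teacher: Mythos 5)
Your proposal is correct and, for everything up to and including the adjunction $\widetilde U\dashv P$, follows essentially the same route as the paper: $d$ and $e$ are obtained from $\delta_{U(X)}\nu_X$ and $0$ via the universal property of $\nu_X$, the comonoid axioms are checked after precomposition with $\nu_X$ because both sides are monoid morphisms out of $U(X)$, and the hom-set bijection is read off from the equalizer defining $P(A)$. (Your argument that counit preservation is automatic --- splitting off the monomorphism $u_{H(A)}$ --- differs in detail from the paper's use of the identity $e=emd$, but both are fine.) The genuine divergence is the antipode. The paper takes $s$ to be ``the unique monoid morphism with $L(s)\nu=-\nu$'' and verifies $m(s\t\id)d\nu=ue\nu$; you correctly observe, first, that $-\nu$ is a morphism of $\LieC$ into $L(U(X)^{\op})$ rather than into $L(U(X))$, so $s$ has to be construed as an anti\-/homomorphism, and second --- the more substantive point --- that $m(s\t\id)d$ and $ue$ are not monoid morphisms, so agreement after precomposition with $\nu$ does not by itself give agreement on all of $U(X)$. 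Your repair, namely exhibiting the cocommutative Hopf structure on the free monoid $F(X)=\bigoplus_{n}X^{\t n}$ (where the convolution series for the antipode terminates degree by degree) and pushing it down along the coequalizer $q\colon F(X)\to U(X)$, is sound and yields a complete argument where the paper's is only a check on generators.

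The one step you should not leave implicit is the descent itself: it is not a formal consequence of $q$ being a surjective bimonoid morphism that the antipode of $F(X)$ factors through $q$. You must verify that $q\,s_{F(X)}$ coequalizes the pair $r,s\colon F(B\oplus B^{\t 2})\to F(B)$ presenting $U(X)$; this is routine precisely because $s_{F(X)}$ is an anti\-/homomorphism, so that both composites are monoid morphisms into $F(X)^{\op}$ determined by their restrictions to $B\oplus B^{\t 2}$, where the two generating relations are carried into relations of the same form. Once $q\,s_{F(X)}=s'q$ for some $s'$, the antipode identities for $s'$ follow by composing the ones for $s_{F(X)}$ with the epimorphism $q$, as you indicate.
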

\begin{proof}
It is easy to see that the definition of $P$, on objects above, is extended uniquely, making it into a functor in such a way that $j$ is a natural transformation~$P\to LH$.
Now let $\nu$ be the unit of the adjunction $U \dashv L$.
For an object $X$ in~$\LieC$ by adjunction the morphism $\delta_{U(X)}\nu_X \colon X \to L(U(X)\t U(X))$ determines a monoid morphism $d \colon U(X)\to U(X)\t U(X)$ such that $L(d) \nu_X = \delta_{U(X)} \nu_X$. Note that this means that $d\nu_X = \delta_X \nu_X = (u_{U(X)}\t \nu_X) \lz^{-1} + (\nu_X \t u_{U(X)})\rz^{-1}$ where~$u_{U(X)}$ is the unit of $U(X)$. In what follows we will drop subscripts when there is little risk of confusion. To show that $d$ is coassociative it is sufficient to show that $\az (\id\t d) d \nu = (d \t \id)d \nu$. 
A straightforward calculation shows that:
\begin{align*}
\az(\id \t d) d \nu 
&= ((u\t u) \t \nu)\az (\id \t \lz^{-1}) \lz^{-1} + ((u \t \nu) \t u)\az(\id \t \rz^{-1})\lz^{-1}\\
&\ \  + ((\nu \t u)\t u)\az(\id \t \lz^{-1})\rz^{-1}\\
\intertext{and}
(d \t \id) d \nu 
&= ((u\t u) \t \nu) (\lz^{-1}\t \id) \lz^{-1} + ((u \t \nu) \t u)(\lz^{-1}\t \id)\lz^{-1}\\
&\ \  + ((\nu \t u)\t u)(\rz^{-1}\t \id)\lz^{-1},
\end{align*}
and hence by coherence $\az (\id d) d \nu = (d \t \id)d \nu$. Trivially $d$ is cocommutative. Let~$e\colon U(X)\to I$ be the unique monoid morphism such that $L(e) \nu =0$. Since
\begin{align*}
(e \t \id) d \nu & = (e \t \id)((u\t \nu)\lz^{-1} + (\nu \t u)\rz^{-1})\\
&= (\id \t  \nu) \lz^{-1} + (0\t u)\rz^{-1}\\
&= \lz^{-1} \nu 
\end{align*}
it follows that $e$ is the counit of $U(X)$. Let us write $\widetilde U(X)$ for the bimonoid with underlying monoid $U(X)$ and comonoid structure as described above. Letting~$s\colon U(X)\to U(X)$ be the unique monoid morphism such that $L(s)\nu = -\nu$ we see that
\begin{align*}
m(s\t \id) d \nu &= m(s\t \id) \delta \nu \\
&= m(s\t \id)((u\t \nu)\lz^{-1} + (\nu\t u)\rz^{-1})\\
&= m((u\t \nu)\lz^{-1} - (\nu\t u) \rz^{-1})\\
&= m((u\t \id)\lz^{-1}\nu) - m((\id \t u) \rz^{-1}\nu)\\
&= \nu - \nu\\
&=0\\
&=e \nu
\end{align*}
and hence $\widetilde U(X)$ is a cocommutative Hopf monoid with antipode $s$. Supposing that~$A$ is a bimonoid we will show that the natural isomorphism $\hom(X,LH(A)) \cong \hom(U(X),H(A))$ induces a natural isomorphism $\hom(X,P(A))\cong \hom(\widetilde U(X),A)$. For a monoid morphism $f\colon U(X)\to H(A)$ since the outer arrows of diagram
\[
\xymatrix{
X \ar[r]^-{\nu}\ar[d]_-{\nu} & L(U(X) \ar[r]^-{L(f)}\ar[d]_-{L(d_{\widetilde U(X)})} \ar@{}[dr]|*+[F]{1}& L(H(A))\ar[d]^-{L(d_A)}\\
U(X) \ar[r]^-{\delta_{U(X)}}\ar[dr]_-{L(f)} & L(U(X)^{\t 2}) \ar[r]_-{L(f^{\t 2})} & L(H(A)^{\t 2})\\
& L(H(A)) \ar[ur]_-{\delta_{H(A)}} &
}
\]
commute if and only if the square $\xybox{(0,0.2)*+[F]{1}}$ commutes, it follows that $f$ preserves the comultiplication if and only if the morphism $L(f) \nu$ factors through $j_A \colon P(A)\to L(H(A))$. Now suppose that $f$ preserves comultiplication. Since in any bimonoid the identity $e = e m d$ holds it follows that
\begin{align*}
L(e f)\nu &= L(e) L(m) L(d) L(f)\nu\\ 
 &= L(e) L(f) L(m) L(d) \nu\\ 
 &= L(ef) L(m) \delta \nu\\ 
 &= L(ef) (\nu+\nu)\\ 
\end{align*}
and hence $L(e f)\nu =0$ which implies $e f =e$ making $f$ a bimonoid morphism. As mentioned above we have that $L(d)\nu_X=\delta_{U(X)}\nu_X$. This produces a unique morphism $\widetilde \nu_X \colon X \to P(\widetilde U(X))$ such that $j_{\widetilde U (X)} \widetilde \nu_X = \nu_X$ which one easily shows is the $X$ component of the unit of $P\dashv \widetilde U$. 
\end{proof}

\section{Monoidal functors and lifting of adjoints}\label{monoidal functors}

Given two monoidal categories $\Cb=(\C,\t,I,\az,\lz,\rz)$ and $\Cb'=(\C',\t',\az',\lz',\rz')$ a \emph{lax monoidal functor} from  $\Cb$ to $\Cb'$ is a triple $(F,\theta,\phi)$ where $F\colon\C\to \C'$ is a functor, $\theta\colon I'\to F(I)$ is a morphism in $\C'$, and $(\phi_{A,B} \colon F(A)\t' F(B) \to F(A\t B))_{A,B \in \C}$ is a natural transformation, such that for all $A$, $B$ and $C$ in $\C$ the diagrams
\[
\xymatrix{
F(A)\t'(F(B)\t'F(C)) \ar[r]^-{\az'}\ar[d]_-{\id \t' \phi} & (F(A)\t' F(B))\t' F(C) \ar[d]^-{\phi \t' \id}\\
F(A)\t' F(B\t C) \ar[d]_-{\phi} & (F(A\t B) \t'  F(C)\ar[d]^-{\phi}\\
F(A\t(B\t C)) \ar[r]_-{F(\az)} & F((A\t B)\t C) 
}
\]
\[
\xymatrix{
F(A) \ar[r]^-{\lz'}\ar[ddr]_-{F(\lz)} & I'\t' F(A)\ar[d]^-{\theta \t' \id} & F(A)\t' I'\ar[d]_-{\id \t' \theta}  & F(A)\ar[l]_-{\rz'}\ar[ddl]^-{F(\rz)}\\
& F(I)\t'F(A)\ar[d]^-{\phi} & F(A)\t F(I) \ar[d]_-{\phi} &\\
& F(I\t A) & F(A\t I)
}
\]
commute. If $\Cb$ and $\Cb'$ are symmetric monoidal categories with \emph{symmetry} isomorphisms $\sz$ and $\sz'$ respectively, then a lax monoidal functor $(F,\theta,\phi)$ between $\Cb$ and $\Cb'$ is called a lax symmetric monoidal functor as soon as the diagram
\[
\xymatrix{
F(A)\t' F(B) \ar[r]^-{\sz'}\ar[d]_-{\phi} & F(B)\t' F(A)\ar[d]^-{\phi}\\
F(A\t B) \ar[r]_-{F(\sz)} & F(B\t A)
}
\]
commutes, 
for every $A$ and $B$ in $\C$. A lax symmetric monoidal functor $(F,\theta,\phi)$ is called strong when $\theta$ and $\phi$ are isomorphisms, and strict when they are identity morphisms.
 
Recall also that (symmetric) monoidal structures on categories lift to both functor categories and products of categories (componentwise). In particular, if $\Cb$ is a (symmetric) monoidal category, then by $\Cb \times \Cb$ we will mean the (symmetric) monoidal category with underlying category $\C\times \C$, with tensor product defined by~$(A,B) \t (A',B') = (A\t A',B\t B')$, and with remaining structure defined componentwise. For a symmetric monoidal category $\Cb$ there is via the coherence theorem a unique natural isomorphism $i_{A,B,C,D} \colon (A\t B)\t (C\t D) \to (A\t C)\t (B\t D)$ built from $\az$, $\lz$, $\rz$ and $\sz$ which we will call (middle) interchange isomorphism. Note that this produces a strong symmetric monoidal functor $(\t, \lz, i) \colon \Cb \t \Cb \to \Cb$. 

It is well known that if $\Cb=(\C,\t,I,\az,\lz,\rz)$ and $\Cb'=(\C',\t',\az',\lz',\rz')$ are monoidal categories, and $(F,\theta,\phi)$ is a lax monoidal functor $\Cb \to \Cb'$, then the assignment $(A,m,u) \mapsto (F(A),F(m)\phi, F(u)\theta)$ is the object map of a functor ${\Mon(\Cb)\to \Mon(\Cb')}$ which sends a morphism $f$ to $F(f)$.
In a similar way, when~$\C$,~$\C'$ and~$F$ are also additive, the assignment $(B,b) \mapsto (F(B),F(b)\phi)$ is the object map of a functor $\Lie(\Cb) \to \Lie(\Cb')$ which sends a morphism $f$ to $F(f)$. Let us denote this induced functor by $\Lie(F)$. The following fact may be known, but we couldn't find a refence:
\begin{proposition}\label{prop:adjunction_lifts_to_lie}
Let $\Cb=(\C,\t,I,\az,\lz,\rz)$ and $\Cb'=(\C',\t',\az',\lz',\rz')$ be monoidal categories such that $\C$ and $\C'$ are additive, and let $F\colon \Cb \to \Cb'$ be a strict monoidal functor which is additive. If $F\colon\C\to \C'$ has a right adjoint, then $\Lie(F)$ has a right adjoint. 
\end{proposition}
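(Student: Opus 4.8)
The plan is to produce the right adjoint of $\Lie(F)$ explicitly, as $\Lie(R)$ where $R\colon\C'\to\C$ is a right adjoint of the underlying functor $F\colon\C\to\C'$, and then to show that the unit and counit of $F\dashv R$ lift to the categories of Lie algebras. So fix such an $R$, with unit $\eta\colon\id_\C\To RF$ and counit $\epsilon\colon FR\To\id_{\C'}$. Being a right adjoint between additive categories, $R$ preserves biproducts, hence is additive.

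First, $R$ inherits a lax symmetric monoidal structure. Since $F$ is strict --- in particular strong --- symmetric monoidal, the right adjoint $R$ carries a canonical lax symmetric monoidal structure (an instance of \emph{doctrinal adjunction}) whose tensor comparison $\psi_{X,Y}\colon R(X)\t R(Y)\to R(X\t'Y)$ is the transpose under $F\dashv R$ of the composite
\[
F\bigl(R(X)\t R(Y)\bigr)=F(R(X))\t'F(R(Y))\xrightarrow{\ \epsilon_X\t'\epsilon_Y\ }X\t'Y,
\]
the first equality being strictness of $F$; equivalently, $\psi_{X,Y}$ is characterized by $\epsilon_{X\t'Y}\circ F(\psi_{X,Y})=\epsilon_X\t'\epsilon_Y$. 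By the remark preceding the statement, the additive lax symmetric monoidal functor $R$ then induces a functor $\Lie(R)\colon\Lie(\Cb')\to\Lie(\Cb)$ sending $(B',b')$ to $(R(B'),R(b')\circ\psi_{B',B'})$ and a morphism $f$ to $R(f)$; and, since $F$ is strict, $\Lie(F)$ sends $(B,b)$ to $(F(B),F(b))$ and $f$ to $F(f)$.

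It remains to verify that $F\dashv R$ lifts. Concretely, I claim that for each Lie algebra $(B,b)$ in $\Cb$ the morphism $\eta_B$ underlies a morphism $(B,b)\to\Lie(R)\Lie(F)(B,b)$ in $\Lie(\Cb)$, and that for each Lie algebra $(B',b')$ in $\Cb'$ the morphism $\epsilon_{B'}$ underlies a morphism $\Lie(F)\Lie(R)(B',b')\to(B',b')$ in $\Lie(\Cb')$. Since a morphism of Lie algebras is exactly a morphism of underlying objects commuting with the brackets, and the underlying-object functors $\Lie(\Cb)\to\C$, $\Lie(\Cb')\to\C'$ are faithful, each claim reduces to a single equation. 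For $\eta_B$ the equation to check is $R(F(b))\circ\psi_{F(B),F(B)}\circ(\eta_B\t\eta_B)=\eta_B\circ b$; I would verify it by transposing both sides along $F\dashv R$, whereupon the right-hand side becomes $F(b)$ by a triangle identity, and the left-hand side becomes $F(b)$ as well, using naturality of $\epsilon$, strictness of $F$ (so that $F(\eta_B\t\eta_B)=F(\eta_B)\t'F(\eta_B)$), the characterization of $\psi$, and a triangle identity. For $\epsilon_{B'}$ the equation to check is $\epsilon_{B'}\circ FR(b')\circ F(\psi_{B',B'})=b'\circ(\epsilon_{B'}\t'\epsilon_{B'})$, which follows at once from naturality of $\epsilon$ together with the characterization of $\psi$.

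Once the two lifts are in hand, nothing further needs checking: the naturality of the families $(\eta_B)$ and $(\epsilon_{B'})$, and the two triangle identities, are equations between underlying morphisms already valid in $\C$ and $\C'$, and the underlying-object functors are faithful, so they hold in $\Lie(\Cb)$ and $\Lie(\Cb')$ too. Hence $(\eta_B)$ and $(\epsilon_{B'})$ are the unit and counit of an adjunction $\Lie(F)\dashv\Lie(R)$, and in particular $\Lie(F)$ has a right adjoint. I expect the only real computation to be the bracket-compatibility of $\eta$ and $\epsilon$ in the previous paragraph --- a short diagram chase --- the crucial conceptual point being that $R$ must be given its doctrinal-adjunction lax monoidal structure, so that the characterization $\epsilon_{X\t'Y}\circ F(\psi_{X,Y})=\epsilon_X\t'\epsilon_Y$ is available; after that everything is formal.
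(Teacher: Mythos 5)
Your proposal is correct and follows essentially the same route as the paper: both obtain the lax (symmetric) monoidal structure on the right adjoint via doctrinal adjunction, with the comparison map characterized by $\epsilon_{X\t'Y}\circ F(\psi_{X,Y})=\epsilon_X\t'\epsilon_Y$, and then lift the adjunction to the categories of internal Lie algebras. The only (inessential) difference is that you verify the adjunction by checking that the unit and counit are Lie algebra morphisms, whereas the paper exhibits the hom-set bijection directly by showing that $f$ and its transpose $\epsilon F(f)$ are simultaneously bracket-preserving.
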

\begin{proof}
Suppose $F$ has a right adjoint $G$, and the unit and counit of the associated adjunction are denoted by $\eta$ and $\epsilon$, respectively. According to~\cite{Kelly:doctrinal} we know that defining $\theta=\eta_I$,
 and for each $A'$ and $B'$ in $\C'$, $\phi_{A',B'} \colon  G(A')\t G(B') \to G(A'\t'B')$ to be the composite $G(\epsilon_{A'}\t'\epsilon_{B'}) \eta_{G(A')\t G(B')}$
produces a lax monoidal functor $(G,\theta,\phi) \colon \Cb'\to \Cb$, and furthermore, these data make the triangle in \eqref{diag:right adjoint lifts}, below, commute. As explained above this lax functor determines a functor $\Lie(G) \colon \Lie(\Cb')\to \Lie(\Cb)$. The claim now follows by observing that for objects $(X,b)$ and $(X',b')$ in $\LieC$ and $\Lie(\Cb')$, respectively, and for $f\colon X\to G(X')$ a morphism in $\C$, either (and hence both) of the statements: $f \colon (X,b)\to (G(X),G(b')\phi)$ is a morphism in $\LieC$; and $\epsilon F(f)\colon (F(X),F(b)) \to (X',b')$ is a morphism in $\Lie(\Cb')$; are equivalent to the commutativity of the diagram
\begin{equation}
\label{diag:right adjoint lifts}
\vcenter{
\xymatrix@C=12ex{
 F(X)\t' F(X)\ar@{=}[d] \ar[r]^-{F(f)\t' F(f)} & FG(X')\t' FG(X') \ar[ddr]^-{\epsilon_{X'}\t'\epsilon_{X'}}\ar@{=}[d] & \\
F(X\t X) \ar[r]^-{F(f\t f)}\ar[dd]_-{F(b)} & F(G(X')\t G(X'))\ar[d]^-{F(\phi_{X',X'})} & \\
& FG(X'\t' X') \ar[r]^-{\epsilon_{X'\t' X'}}\ar[d]_-{FG(b')} & X'\t'X'\ar[d]^-{b'}\\
F(X)\ar[r]_-{F(f)} & FG(X') \ar[r]_-{\epsilon_{X'}} & X'.
}
}
\end{equation}
\end{proof} 

\section{Actions}\label{actions}

Recall that if $A=(A,m,u)$ is a monoid in $\Cb$ and $X$ is an object in $\C$ an action of $A$ on $X$ is a morphism $\phi \colon A\t X \to X$ making the diagrams
\[
\xymatrix{
(A\t A) \t X \ar[rr]^-{\az^{-1}} \ar[d]_-{m \t \id} && A \t (A \t X) \ar[d]^-{\id \t \phi}\\
A \t X \ar[r]_-{\phi} &X &A\t X\ar[l]^-{\phi}
}
\xymatrix{
& I\t X \ar[dr]_-{\lz} \ar[r]^-{u \t \id} & A \t X \ar[d]^-{\phi}\\
& & X
}
\]
commute.
Given an object $B=(B,b)$ in $\LieC$ and an object $X$ in $\C$ we say that a morphism $\theta \colon B\t X \to X$ is an action of $B$ on $X$ if
the diagram
\begin{equation}\label{eq:lie_act}
\vcenter{
\xymatrix{
(B\t B)\t X \ar[rr]^-{\az^{-1}((\id - \sz)\t \id)} \ar[d]_-{b\t \id} && B\t (B\t X) \ar[d]^-{\id \t \theta}\\
B\t X \ar[r]_-{\theta} &X &B\t X\ar[l]^-{\theta}
}
}
\end{equation}
commutes.
Let us write $E(X)$ for the monoid with underlying object $X^X$ and with multiplication $m \colon X^X\t X^X\to X^X$ and $u \colon I \to X^X$ the unique morphisms making the diagrams
\[
\xymatrix{
(X^X\t X^X) \t X \ar[rr]^-{\az^{-1}} \ar[d]_-{m \t \id} && X^X \t (X^X \t X) \ar[d]^-{\id \t \ev}\\
X^X \t X \ar[r]_-{\ev} &X &X^X\t X\ar[l]^-{\ev}
}
\xymatrix{
& I\t X \ar[dr]_-{\lz} \ar[r]^-{u \t \id} & X^X \t X \ar[d]^-{\ev}\\
& & X
}
\]
commute. Let $f \colon A' \to A$ be a morphism in $\MonC$, $g\colon B'\to B$ be a morphism in~$\LieC$, $\phi\colon A\t X\to X$ be an action of $A$ on $X$, and $\theta \colon B\t X\to X$ be an action of $B$ on $X$, then $\phi(f\t \id)$ is an action of $A'$ on $X$ and $\theta(g\t \id)$ is an action of $B'$ on $X$. We have
\begin{proposition}
For an object $X$ in $\C$ there are natural isomorphisms between the following functors:
\begin{enumerate}
\item the functor assigning to each $B$ in $\LieC^{\op}$ the set of actions of $B$ on $X$;
\item  $\hom(-,L(E(X))\colon\LieC^{\op}\to \Set$;
\item $\hom(U(-),E(X))\colon\LieC^{\op} \to \Set$;
\item the functor assigning to each $B$ in $\LieC^{\op}$ the set of monoid actions of $U(B)$ on $X$.
\end{enumerate}
 For an object $B$ in $\LieC$, under these natural isomorphisms,
an action $\theta$ of $B$ on $X$, a morphism $\bar \theta\colon B\to L(E(X))$ in $\LieC$, a morphism $\bar \phi\colon U(B)\to E(X)$ in~$\MonC$, and a monoid action $\phi$ of $U(B)$ on $X$ are related when:
$\theta = \ev (\bar \theta \t \id)$, $\bar \theta = \bar \phi \nu$, $\phi = \ev(\bar \phi \t \id)$,
and (hence) $\theta = \phi(\nu \t \id)$, where $\nu$ is the unit of the adjunction $U \dashv L$.
\end{proposition}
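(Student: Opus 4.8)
The plan is to obtain all four natural isomorphisms by composing three elementary bijections, each of which simply transposes a defining diagram across an adjunction. The isomorphism between~(ii) and~(iii) will be the adjunction $U\dashv L$ itself: the natural bijection $\hom(B,L(E(X)))\cong\hom(U(B),E(X))$ sends a Lie morphism $\bar\theta$ to the unique monoid morphism $\bar\phi$ with $L(\bar\phi)\,\nu_{B}=\bar\theta$, equivalently $\bar\phi\,\nu_{B}=\bar\theta$ since $L$ acts as the identity on morphisms, and its naturality in $B$ is the naturality of the adjunction.

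For the isomorphism between~(iii) and~(iv) I would first record the general fact that, for any monoid $A$ in $\Cb$, monoid morphisms $A\to E(X)$ correspond bijectively and naturally to actions of $A$ on $X$. Given an action $\phi\colon A\t X\to X$, let $\bar\phi\colon A\to X^{X}$ be its transpose under the closed structure, so $\phi=\ev(\bar\phi\t\id)$; using the defining diagrams of the multiplication $m$ and unit $u$ of $E(X)$, namely $\ev(m\t\id)=\ev(\id\t\ev)\az^{-1}$ and $\ev(u\t\id)=\lz$, one checks that the associativity and unit axioms for the action of $A$ transpose precisely to the statements that $\bar\phi$ preserves multiplication and unit, and conversely. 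Naturality in $A$ is naturality of transposition; restricting along $U$ and taking $A=U(B)$ then yields the natural isomorphism between~(iii) and~(iv).

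For the isomorphism between~(i) and~(ii), a morphism $\theta\colon B\t X\to X$ transposes to $\bar\theta\colon B\to X^{X}$ with $\theta=\ev(\bar\theta\t\id)$, and the claim is that $\theta$ satisfies the Lie-action axiom~\eqref{eq:lie_act} if and only if $\bar\theta$ is a morphism from $B$ to $L(E(X))$ in $\LieC$, i.e.\ $\bar\theta\, b=m(\id-\sz)(\bar\theta\t\bar\theta)$ with $m$ the multiplication of $E(X)$. Transposing this identity back across the closed adjunction, the left-hand side becomes $\theta(b\t\id)$; for the right-hand side one uses that transposition is additive (the tensor functors are additive, $\C$ being additive and $\Cb$ closed), that $\sz(\bar\theta\t\bar\theta)=(\bar\theta\t\bar\theta)\sz$ by naturality of the symmetry, and the diagram $\ev(m\t\id)=\ev(\id\t\ev)\az^{-1}$ together with naturality of $\az$, to rewrite it as $\theta(\id\t\theta)\az^{-1}((\id-\sz)\t\id)$; this is exactly the commutativity of~\eqref{eq:lie_act}. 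Naturality in $B$ is again naturality of transposition.

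Chaining the three bijections gives the stated natural isomorphism of all four functors, and the formulas relating $\theta$, $\bar\theta$, $\bar\phi$ and $\phi$ are precisely those used at each stage; in particular $\theta=\phi(\nu\t\id)$ follows from $\theta=\ev(\bar\theta\t\id)=\ev((\bar\phi\,\nu)\t\id)=\ev(\bar\phi\t\id)(\nu\t\id)=\phi(\nu\t\id)$. The only step demanding real care is the third one, where the associator, the symmetry and the additive structure all have to be juggled correctly while transposing the bracket-preservation identity; the first two bijections are routine transpositions of the defining diagrams of $U\dashv L$, of the closed structure, and of $E(X)$.
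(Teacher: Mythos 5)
Your proposal is correct and follows essentially the same route as the paper: the isomorphism between (iii) and (iv) is quoted as the standard correspondence between actions and monoid morphisms into $E(X)$, the one between (ii) and (iii) is the adjunction $U\dashv L$, and the real content is the equivalence between (i) and (ii), which the paper establishes by a diagram chase and you establish by transposing the bracket-preservation identity across the closed structure --- the same computation in a different notation.
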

\begin{proof}
The natural isomorphism between the functors in (3) and (4) is standard, while the natural isomorphism between the functors in (2) and (3) is obtained from the adjunction $U \dashv L$. Given a morphism $\theta \colon B\t X\to X$, let $\bar \theta \colon B\to X^X$ be the unique morphism such that $\ev (\bar \theta \t \id)=\theta$. One easily observes that if the map~$\bar \theta \colon B\to L(E(X))$ is a morphism in $\LieC$, then the diagram $\xybox{(0,0.2)*+[F]{1}}$ below is commutative, making the whole diagram
\[
\xymatrix@C=7ex@R=4ex{
(B\t B)\t X \ar[r]^-{(\id-\sz)\t\id}\ar[rd]_-{(\bar \theta\t\bar \theta)\t \id}\ar[ddd]_-{b\t \id}\ar@{}[dddr]|*+[F]{1}&(B\t B)\t X\ar[r]^-{\az^{-1}}&B\t(B\t X)\ar[dd]_-{\bar \theta \t (\bar \theta \t \id)}\ar[rdd]^-{\id \t \theta} &\\
&(X^X\t X^X)\t X \ar[d]^-{(\id-\sz)\t \id}&&\\
&(X^X\t X^X)\t X\ar[d]^-{m \t \id}\ar[r]^-{\az^{-1}}&X^X\t(X^X\t X)\ar[d]^-{\id \t \ev}&B\t X\ar[ldd]^-{\theta}\\
B\t X \ar[r]^-{\bar \theta \t \id}\ar@/_2ex/[rrd]_-{\theta}&X^X\t X\ar[rd]^-{\ev}&X^X\t X\ar[d]^-{\ev}&\\
&&X&
}
\]
commute and hence $\theta$ is an action of $B$ on $X$. The converse follows immediately from the universal property of $\ev$. 
\end{proof}
Let $\two$ denote the category with two objects $0$ and $1$ and one non-identity morphism $0\to 1$. Recall that the functor category $\C^{\two}$ can be made into a symmetric monoidal category $\Cb^\two$ with monoidal structure defined componentwise. Recall also, that when $\C$ has pullbacks the symmetric monoidal category $\Cb^\two$ is monoidal closed. Since the functor $L \colon \Lie(\Cb^\two) \to \Mon(\Cb^\two)$ is, up to isomorphism, the same as the functor $L^\two \colon \LieC^{\two} \to \MonC^\two$, it follows from the previous proposition applied to $\Cb^\two$ that for a morphism $g\colon B\to B'$ in $\LieC$, a morphism $f\colon X\to X'$ in $\C$, and monoid actions $\phi$ and $\phi'$ of $U(B)$ on $X$ and $U(B')$ on $X'$ respectively, the outer arrows of the diagram
\[
\xymatrix{
B\t X \ar[r]^-{\nu \t \id}\ar[d]_-{g\t f} & U(B)\t X \ar[d]^-{U(g)\t f} \ar[r]^-{\phi} & X\ar[d]^-{f}\\
B'\t X' \ar[r]_-{\nu \t \id} & U(B')\t X' \ar[r]_-{\phi'} & X'
}
\]
commute if and only if the right hand square commutes. In particular, writing~$\LieAct{B}{\Cb}$ and $\MonAct{A}{\Cb}$ for the categories of $B$ actions and $A$ actions respectively (for some objects $B$ in $\LieC$ and $A$ in $\MonC$), this implies:
\begin{proposition}\label{prop:mon_act_isomorphic_lie_act}
For an object $B$ in $\LieC$ the assignment $(X,\phi) \mapsto (X,\phi(\nu \t \id))$ is the object map of an isomorphism $\MonAct{U(B)}{\Cb} \to \LieAct{B}{\Cb}$ which is identity on morphisms.
\end{proposition}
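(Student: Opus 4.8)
The plan is to write the claimed functor down explicitly and then extract every property it needs from results already in place. Writing $\nu$ for the unit of the adjunction $U\dashv L$, define $K\colon\MonAct{U(B)}{\Cb}\to\LieAct{B}{\Cb}$ by $K(X,\phi)=(X,\phi(\nu\t\id))$ on objects and by $K(f)=f$ on a morphism $f$ of $\MonAct{U(B)}{\Cb}$ — recall that such an $f$ is, by definition, a morphism $f\colon X\to X'$ in $\C$ intertwining the two $U(B)$-actions. The only things to verify are then that $K$ is well defined, that it is bijective on objects, and that on each hom-set it is a bijection (automatically the identity on underlying morphisms).

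Well-definedness on objects is part of the Proposition with items (1)--(4) above: under the natural isomorphism identifying the functor of Lie actions of $B$ on $X$ (item (1)) with the functor of monoid actions of $U(B)$ on $X$ (item (4)), a monoid action $\phi$ corresponds precisely to the Lie action $\phi(\nu\t\id)$, so in particular $\phi(\nu\t\id)$ satisfies the Lie-action axiom~\eqref{eq:lie_act} and $(X,\phi(\nu\t\id))$ is genuinely an object of $\LieAct{B}{\Cb}$. Well-definedness on morphisms — that $f=K(f)$ intertwines the Lie actions $\phi(\nu\t\id)$ and $\phi'(\nu\t\id)$ whenever it intertwines $\phi$ and $\phi'$ — is the forward implication of the equivalence recorded in the paragraph immediately preceding this statement, specialised to $g=\id_B$: there, commutativity of the right-hand square is exactly the assertion that $f$ is a morphism of $U(B)$-actions, commutativity of the outer rectangle is exactly the assertion that $f$ is a morphism of $B$-actions, and the two are shown to be equivalent. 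Since $K$ leaves the underlying morphisms untouched, it trivially preserves identities and composites, so it is a functor, manifestly the identity on morphisms.

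It remains to see $K$ is an isomorphism of categories. On objects this is the bijectivity of the object-level correspondence $\phi\mapsto\phi(\nu\t\id)$ just recalled from the Proposition with items (1)--(4) (evaluated at the given $B$). On hom-sets, full faithfulness follows because the equivalence from the preceding paragraph, now used in both directions, says that a morphism $f\colon X\to X'$ in $\C$ lies in $\MonAct{U(B)}{\Cb}\bigl((X,\phi),(X',\phi')\bigr)$ if and only if it lies in $\LieAct{B}{\Cb}\bigl((X,\phi(\nu\t\id)),(X',\phi'(\nu\t\id))\bigr)$; hence $K$ carries the first hom-set bijectively onto the second. A bijective-on-objects, fully faithful functor is an isomorphism of categories, which is the assertion, and its inverse is again the identity on morphisms.

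I do not expect a genuine obstacle here: the substantive content — the Lie-action identity for $\phi(\nu\t\id)$, the object-level bijection, and the equivalence of the two ``morphism of actions'' conditions — has all been isolated in the two preceding Propositions and in the commuting-square discussion before the statement, and the point of the present proof is merely to note that assembling them produces a functor that is visibly an isomorphism and visibly the identity on morphisms. The only place warranting a second's care is checking that the bijection supplied by the earlier Proposition is the one given by the explicit formula $\theta=\phi(\nu\t\id)$ rather than some other composite built from $\nu$ and $\ev$ — but this is recorded verbatim there.
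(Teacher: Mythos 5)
Your argument is correct and matches the paper's own justification: the paper states this proposition without a separate proof, presenting it as an immediate consequence of the object-level bijection from the proposition with items (1)--(4) and of the morphism-level equivalence (outer rectangle commutes iff right-hand square commutes) established via $\Cb^\two$ in the paragraph just before the statement, which is precisely what you assemble. Your specialisation to $g=\id_B$ for the hom-set bijection is the intended reading, so nothing is missing.
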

Now suppose that $A$ is a bimonoid with comultiplication $d\colon A\to A\t A$. If $\phi$ and $\phi'$ are actions of the underlying monoid of $A$ on $X$ and $Y$, respectively, then $\phi \t^{d} \phi' \colon A\t (X\t Y) \to (X\t Y)$ defined by $\phi \t^{d} \phi' = (\phi \t \phi') i (d \t \id)$ is a monoid action.
To see why, note that the monoidal functor $(\t,\lz,i) \colon \Cb\times \Cb \to \Cb$ sends the action $(\phi,\phi')$ of the monoid~$(A,A)$ on $(X,Y)$ to the action $(\phi \t \phi') i$ of the monoid~$A\t A$ on $X\t Y$. However, since $d \colon A\to A\t A$ is a monoid morphism it follows that $\phi \t^{d} \phi' = (\phi \t \phi')i(d\t \id)$ is an action of $A$ on $X\t Y$.

Noting that the counit of the bimonoid $A$ determines a monoid action of $A$ on $I$ one easily establishes that the symmetric monoidal structure of $\Cb$ lifts to $\MonAct{A}{\Cb}$ and hence the forgetful functor $G_A\colon \MonAct{A}{\Cb} \to \Cb$ is strict monoidal. Let us write~$\MonActu{A}{\Cb}$ for this monoidal category. It is well known that when $\Cb$ is monoidal closed $G_A$ has a right adjoint. The object map of this right adjoint assigns to each object $X$ the object $X^A$ with action $\phi \colon A\t X^A \to X^A$, the unique morphism making the diagram
\[
\xymatrix{
(A\t X^A) \t A \ar[rr]^-{\phi \t \id}\ar[d]_-{(\id \t \sz)\az(\sz\t \id)} && X^A\t A \ar[d]^-{\ev}\\
X^A\t (A\t A) \ar[r]_-{\id \t m} & X^A \t A \ar[r]^-{\ev} & X
}
\]
commute.
Therefore, since the categories $\C$ and $\MonAct{A}{\Cb}$ are additive and so is the functor $G_A$, by Proposition~\ref{prop:adjunction_lifts_to_lie}, it follows that: 
\begin{proposition}\label{prop:forget_lmonact_right_adj}
The induced forgetful functor $\Lie(G_A)\colon \LMonAct{A}{\Cb} \to \LieC$ has a right adjoint.
\end{proposition}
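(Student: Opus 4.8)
The plan is to obtain this as a direct application of Proposition~\ref{prop:adjunction_lifts_to_lie}, taking the functor $G_A\colon\MonActu{A}{\Cb}\to\Cb$ in the role played by $F$ there.

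First I would assemble the hypotheses. The underlying category $\MonAct{A}{\Cb}$ is additive: biproducts of $A$-actions are computed as in $\C$ with the evident componentwise action, using that $A\t-$, being a left adjoint since $\Cb$ is closed, preserves biproducts; and $\C$ is additive by the standing assumptions on $\Cb$. Both facts were already recorded in the discussion preceding the statement. The functor $G_A$ is strict monoidal, since the monoidal structure on $\MonActu{A}{\Cb}$ was defined precisely so that $G_A$ is strict, and it is evidently additive. Finally, $G_A$ has a right adjoint: since $\Cb$ is monoidal closed, $X\mapsto X^A$, equipped with the action displayed in the diagram above, is the object map of a right adjoint to $G_A$. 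Hence the hypotheses of Proposition~\ref{prop:adjunction_lifts_to_lie} hold with $(\Cb,\Cb',F)=(\MonActu{A}{\Cb},\Cb,G_A)$.

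Applying that proposition then produces a right adjoint to $\Lie(G_A)\colon\Lie(\MonActu{A}{\Cb})\to\Lie(\Cb)$, which by the definition of the notation is precisely the functor $\LMonAct{A}{\Cb}\to\LieC$ in the statement; explicitly, the right adjoint is $\Lie\bigl((-)^A\bigr)$, where $(-)^A\colon\Cb\to\MonActu{A}{\Cb}$ carries the lax monoidal structure furnished by Kelly's doctrinal adjunction, exactly as in the proof of Proposition~\ref{prop:adjunction_lifts_to_lie}. The only point deserving a sentence of care, rather than a genuine obstacle, is to match the functor $\Lie(G_A)$ arising from the general construction of the functor induced on categories of internal Lie algebras by a lax monoidal functor with the intended forgetful functor, which sends a Lie algebra $(B,b)$ in $\MonActu{A}{\Cb}$ to its underlying Lie algebra in $\Cb$; but since $G_A$ is strict monoidal the comparison data are identities, so the general construction sends $(B,b)$ to $(G_A(B),G_A(b))$, and the two functors coincide.
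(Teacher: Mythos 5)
Your proposal is correct and follows exactly the route the paper takes: the paper also derives this proposition by applying Proposition~\ref{prop:adjunction_lifts_to_lie} to the strict monoidal, additive functor $G_A$, whose right adjoint $X\mapsto X^A$ exists because $\Cb$ is monoidal closed. Your additional remarks (why $\MonAct{A}{\Cb}$ is additive, and why the induced $\Lie(G_A)$ agrees with the intended forgetful functor) only make explicit what the paper leaves implicit.
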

Suppose $B$ is an object in $\LieC$. Let us write $\LieActu{B}{\Cb}$ for the monoidal category obtained by translating the monoidal structure from $\MonActu{U(B)}{\Cb}$ via the isomorphism given in Proposition~\ref{prop:mon_act_isomorphic_lie_act}. 
Trivially: 
\begin{proposition}\label{prop:LLieAct_iso_LMonAct}
For an object $B$ in $\LieC$ the assignment given by $((X,\phi),b) \mapsto ((X,\phi(\nu\t \id)),b)$ is the object map of an isomorphism $\LMonAct{U(B)}{\Cb} \to \LLieAct{B}{\Cb}$ which is the identity on morphisms.
\end{proposition}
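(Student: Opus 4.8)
The plan is to obtain this immediately from Proposition~\ref{prop:mon_act_isomorphic_lie_act} by applying the functor $\Lie(-)$. Write $N\colon\MonAct{U(B)}{\Cb}\to\LieAct{B}{\Cb}$ for the isomorphism of Proposition~\ref{prop:mon_act_isomorphic_lie_act}, sending $(X,\phi)$ to $(X,\phi(\nu\t\id))$; it is the identity on underlying objects and on morphisms. First I would note that, by the very definition of $\LieActu{B}{\Cb}$ --- whose monoidal structure was declared to be the one transported from $\MonActu{U(B)}{\Cb}$ along $N$ --- the functor $N$ is a strict symmetric monoidal isomorphism $\MonActu{U(B)}{\Cb}\to\LieActu{B}{\Cb}$. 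It is moreover additive, since both monoidal categories sit over $\C$ via forgetful functors that create the biproducts, and $N$ is the identity on these underlying data; the same holds for $N^{-1}$.

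Next I would invoke the construction recalled just before Proposition~\ref{prop:adjunction_lifts_to_lie}: for a strict monoidal additive functor $F$, the functor $\Lie(F)$ acts by $(Y,\beta)\mapsto(F(Y),F(\beta))$ on objects and by $F$ on morphisms, and from this explicit formula one reads off $\Lie(\id)=\id$ and $\Lie(F\circ F')=\Lie(F)\circ\Lie(F')$ whenever the composite is defined. Applying $\Lie(-)$ to $N$ then yields a functor $\LMonAct{U(B)}{\Cb}\to\LLieAct{B}{\Cb}$ which on objects sends $((X,\phi),b)$ to $((X,\phi(\nu\t\id)),b)$ and which is the identity on morphisms --- exactly the assignment in the statement. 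Finally, $\Lie(N^{-1})$ is a two-sided inverse of $\Lie(N)$, because $\Lie(N^{-1})\circ\Lie(N)=\Lie(N^{-1}\circ N)=\Lie(\id)=\id$ and symmetrically; hence $\Lie(N)$ is an isomorphism of categories.

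Since the result is, as the excerpt announces, essentially trivial, there is no genuine obstacle. The only points needing a moment's care are bookkeeping ones: confirming that transporting a monoidal structure along an isomorphism is precisely what makes that isomorphism strict monoidal, and recording the elementary functoriality of $\Lie(-)$ on strict monoidal additive functors used to invert $\Lie(N)$. One could equally well bypass the latter by checking directly that $\Lie(N)$ is bijective on objects and on morphisms, which is immediate from the corresponding properties of $N$.
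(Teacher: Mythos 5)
Your proof is correct and matches the paper's intent exactly: the paper offers no written argument (it labels the proposition ``Trivially''), and the implicit justification is precisely yours, namely that $\LieActu{B}{\Cb}$ carries the monoidal structure transported along the isomorphism of Proposition~\ref{prop:mon_act_isomorphic_lie_act}, so that isomorphism is strict monoidal and additive, and applying $\Lie(-)$ to it and to its inverse yields the stated isomorphism of categories.
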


Let us calculate explicitly what the tensor product in $\LieActu{B}{\Cb}$ is. Suppose that $(X,\theta)$ and $(X',\theta')$ are objects in $\LieAct{B}{\Cb}$. Letting $(X,\phi)$ and $(X',\phi')$ be the corresponding objects in $\MonAct{U(B)}{\Cb}$ we see that the diagram
\[
\xymatrix@C=-4ex@R=3.5ex{
&&U(B)^{\t 2}\t (X\t X')\ar[dd]^-{i}&&\\
&(B\t I)\t (X\t X')\ar[ru]^-(0.4){(\nu\t u)\t \id}\ar[dd]^-{i}&&(I\t B)\t (X\t X')\ar[lu]_-(0.4){((u\t \nu)\t \id}\ar[dd]^-{i}&\\
B\t (X\t X')\ar[ru]^-{\rz^{-1}\t \id}\ar@/_17.22pt/[rddd]_-{\az}&&(U(B)\t X)\t(U(B)\t X')\ar[dd]^-{\phi\t \phi'}&&B\t (X \t X') \ar[lu]_-{\lz^{-1}\t \id}\ar@/^17.22pt/[lddd]^-{\sz \az (\id \t \sz)}\\
&(B\t X)\t (I\t X')\ar[dd]^-{\id \t \lz}\ar[ru]|{(\nu\t \id)\t (u \t \id)}&&(I\t X)\t(B\t X')\ar[lu]|{(u\t \id)\t(\nu \t \id)}\ar[dd]^-{\lz\t \id}&\\
&&X\t X'&&\\
&(B\t X)\t X'\ar[ru]_-{\theta \t \id}&&X\t(B\t X')\ar[lu]^-{\id \t \theta'}&
}
\]
commutes. This implies that
\begin{align*}
(\phi \t^{d} \phi') (\nu \t \id) &= (\phi \t \phi') i (d \nu \t \id)\\
&=(\phi \t \phi') i (\delta \nu \t \id) \\
&= (\phi \t \phi') i (((u\t \nu)\lz^{-1} + (\nu\t u)\rz^{-1}) \t \id)\\
&= (\theta \t \id) \az + (\id \t \theta') \sz \az (\id \t \sz)
\end{align*}
and hence the translated tensor product of $\LieActu{B}{\Cb}$ is defined by
\[
(X,\theta)\t (X',\theta') = (X\t X', \theta * \theta')
\]
where $\theta * \theta'= (\theta \t \id) \az + (\id \t \theta') \sz \az (\id \t \sz)$.
One easily observes that this means that an object in $\LLieAct{B}{\Cb}$ can be identified with a pair $((X,b), \theta)$ where~$(X,b)$ is an object in $\LieC$, and $\theta$ is an action of $B$ on $X$ making the diagram
\begin{equation}\label{eq:lie_lie_act}
\vcenter{
\xymatrix{
B\t (X\t X) \ar[r]^-{\id \t b} \ar[d]_-{\theta * \theta} & B\t X\ar[d]^-{\theta}\\
X\t X \ar[r]_-{b} & X
}
}
\end{equation}
commute.
Writing $\overline{G_B} \colon \LieActu{B}{\Cb} \to \C$ for the forgetful functor we have:
\begin{proposition}
For an object $B$ in $\LieC$ the map assigning to each object $(A,p,s)$ in $\Pt_B(\LieC)$ the pair $(X,\theta)$ where $X$ is the domain of $k\colon X\to A$ the kernel of $p$ and $\theta \colon B\t X \to X$ is the unique morphism in $\C$ with $k \theta = b_A (s\t k)$, is the object map of a functor $W\colon \Pt_B(\LieC) \to \LLieAct{B}{\Cb}$ which is part of an equivalence of categories
making the diagram
\begin{equation}
\label{diag:equiv_pts_llieact}
\vcenter{
\xymatrix@C=1ex{
\Pt_B(\LieC) \ar[rr]^-{W} \ar[dr]_-{\Ker_B} && \LLieAct{B}{\Cb} \ar[dl]^-{\Lie(\overline{G_B})} \\
& \LieC
}
}
\end{equation}
commute.
\end{proposition}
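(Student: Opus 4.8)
The plan is to establish the equivalence by producing an explicit pseudo-inverse of $W$ from semidirect products; the argument then splits into three parts: checking that $W$ is a well-defined functor over $\LieC$, building a functor $S\colon\LLieAct{B}{\Cb}\to\Pt_B(\LieC)$, and exhibiting natural isomorphisms $S\circ W\cong\id$ and $W\circ S\cong\id$. For the first part, given a point $(A,p,s)$ in $\Pt_B(\LieC)$ with $k\colon X\to A$ the kernel of $p$, the morphism $b_A(k\t k)$ factors through $k$ (since $p\,b_A(k\t k)=b_B(pk\t pk)=0$), giving the induced bracket $b_X$ with $k\,b_X=b_A(k\t k)$ and $\Ker_B(A,p,s)=(X,b_X)$; likewise $b_A(s\t k)$ factors through $k$ (since $p\,b_A(s\t k)=b_B(\id\t 0)=0$), which defines $\theta\colon B\t X\to X$ by $k\,\theta=b_A(s\t k)$. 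Using that $k$ is a monomorphism and that $s$ is a morphism of $\LieC$ (so $s\,b_B=b_A(s\t s)$), both the action axiom \eqref{eq:lie_act} and the compatibility \eqref{eq:lie_lie_act} for the pair $((X,b_X),\theta)$ become, after post-composition with $k$, instances of the Leibniz-type identity $b_A(b_A\t\id)=b_A(\id\t b_A)\az^{-1}((\id-\sz)\t\id)$, which is itself a formal consequence of the two laws of \eqref{dia:lie}. Thus $W(A,p,s):=((X,b_X),\theta)$ belongs to $\LLieAct{B}{\Cb}$; on morphisms, $W$ sends a morphism of points to the induced morphism of kernels, which automatically respects the actions, and \eqref{diag:equiv_pts_llieact} commutes because $\Lie(\overline{G_B})\circ W$ and $\Ker_B$ both forget to $(X,b_X)$ and to the induced morphism of kernels.

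For the second part, recall that an object of $\LLieAct{B}{\Cb}$ is a pair $((X,b),\theta)$ with $\theta$ an action of $B$ on $X$ satisfying \eqref{eq:lie_lie_act}; as $\t$ is a left adjoint in each variable between additive categories it preserves biproducts, so $(X\oplus B)\t(X\oplus B)$ splits as $(X\t X)\oplus(X\t B)\oplus(B\t X)\oplus(B\t B)$. I would take $X\rtimes_\theta B$ to be the object $X\oplus B$ with the bracket $b_\theta$ whose restrictions to these four summands are $\iota_1 b$, $-\iota_1\theta\sz$, $\iota_1\theta$ and $\iota_2 b_B$ respectively, and set $S((X,b),\theta):=(X\rtimes_\theta B,\pi_2,\iota_2)$, sending a morphism $h$ of $\LLieAct{B}{\Cb}$ to $h\oplus\id_B$. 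Granting that $b_\theta$ is a Lie bracket, it is routine that $\pi_2$ and $\iota_2$ are morphisms of $\LieC$ and that $\pi_2\iota_2=\id_B$, so $S$ lands in $\Pt_B(\LieC)$. For $b_\theta$, the antisymmetry law of \eqref{dia:lie} reduces on $X\t X$ and $B\t B$ to antisymmetry of $b$ and of $b_B$, and on $X\t B$ and $B\t X$ to cancellation of the two cross terms (using $\sz^2=\id$); while the Jacobi law, checked on the eight summands of $(X\oplus B)\t\bigl((X\oplus B)\t(X\oplus B)\bigr)$, organises according to the orbits of the cyclic permutation $\sz\az$ — which fixes $X\t(X\t X)$ and $B\t(B\t B)$ and splits the other six into two orbits of three — into, respectively, the Jacobi law for $b$, the Jacobi law for $b_B$, the compatibility \eqref{eq:lie_lie_act}, and the action axiom \eqref{eq:lie_act}.

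For the third part, $S\circ W\cong\id$ via $[k,s]\colon X\oplus B\to A$: this is a morphism of points ($p[k,s]=[0,\id_B]=\pi_2$ and $[k,s]\iota_2=s$), it is a morphism of $\LieC$ straight from the definitions of $b_X$ and $\theta$ (together with $b_A\sz=-b_A$), and it is invertible in $\C$ with inverse $\langle\kappa,p\rangle$, where $\kappa\colon A\to X$ is the unique morphism with $k\,\kappa=\id_A-sp$ (which exists since $p(\id_A-sp)=0$); naturality is clear. And $W\circ S\cong\id$ because the kernel of $\pi_2\colon X\oplus B\to B$ is $\iota_1\colon X\to X\oplus B$, the bracket it inherits is $b$, and the action $\theta'$ it inherits satisfies $\iota_1\theta'=b_\theta(\iota_2\t\iota_1)=\iota_1\theta$, so $\theta'=\theta$. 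Together these present $W$ and $S$ as mutually inverse (up to natural isomorphism) functors compatible with the functors down to $\LieC$, which is exactly the assertion. I expect the one genuinely laborious step to be the Jacobi-law verification for $b_\theta$ in the second part — namely the bookkeeping that the eight-summand expansion of \eqref{dia:lie} assembles precisely into the four identities listed — with everything else being formal or a routine appeal to the coherence theorem.
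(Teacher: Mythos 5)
Your proposal is correct and follows essentially the same route as the paper: the semidirect-product point $(X\oplus B,\pi_2,\iota_2)$ with the four-summand bracket is exactly the paper's witness for essential surjectivity, and your isomorphism $[k,s]$ with inverse $\langle\kappa,p\rangle$ encodes the same splitting $\id_A=k\kappa+sp$ that the paper uses (as $l$) to prove fullness via $g=k'fl+s'p$. The only difference is presentational --- you package the argument as an explicit quasi-inverse with natural isomorphisms, whereas the paper verifies that $W$ is full, faithful and essentially surjective --- and both treatments defer the same routine Jacobi-identity bookkeeping.
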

\begin{proof}
Let $B$ be an object in $\LieC$, for an object $(A,p,s)$ in $\Pt_B(\LieC)$, defining $W(A,p,s)=(X,\theta)$ where $X$ and $\theta$ are defined as above. One can check that the commutativity of diagrams \eqref{eq:lie_act} and \eqref{eq:lie_lie_act} follow from  commutativity of diagrams \eqref{dia:lie} and coherence. 

If $f\colon (A,p,s)\to (A',p',s')$ is a morphism in $\Pt_B(\LieC)$, then  an easy calculation shows that the induced map between the kernels of $p$ and $p'$ lifts to a morphism $W(A,p,s)\to W(A',p',s')$ producing a functor $W\colon\Pt_B(\LieC)\to \LLieAct{B}{\Cb}$ making diagram~\eqref{diag:equiv_pts_llieact} commute. On the other hand, given $(X,\theta)$ in $\LLieAct{B}{\Cb}$ letting  $b \colon (B\oplus X)\t (B\oplus X) \to B\oplus X$ be the unique morphism with $b(\iota_1\t \iota_1) = \iota_1 b_B$, $b(\iota_1\t\iota_2)=\theta$, $b(\iota_2\t \iota_1)=-\theta \sz$ and $b(\iota_2\t \iota_2)= \iota_2 b_X$ makes $((B\oplus X,b),\pi_1,\iota_1)$ into an object of $\Pt_B(\LieC)$ such that $W((B\oplus X,b),\pi_2,\iota_2) = (X,\theta)$. Therefore, noting that $W$ is faithful, to show that it is part of an equivalence of categories we need only show it is full. 

To that end suppose that $(A,p,s)$ and $(A',p',s')$
are objects in $\Pt_B(\LieC)$, $k\colon X\to A$ and $k'\colon X'\to A'$ are the kernels of
$p$ and $p'$, respectively, $\theta\colon B\t X\to X$ and $\theta'\colon B\t X'\to X'$
are morphisms such that $W(A,p,s)=(X,\theta)$ and $W(A',p',s')=(X',\theta')$,
and $f$ is a morphism from $W(A,p,s)$ to $W(A',p',s')$. Letting $l\colon A\to X$ be the unique morphism
in $\C$ such that $kl=\id-sp$, using the fact that $k$ and $s$ are jointly epimorphic,
one can easily check that if $g = k'fl+s'p$ then $g$ is a morphism from $(A,p,s)$ to $(A',p',s')$ such that $W(g)=f$.
\end{proof}
Combining the previous proposition with Proposition~\ref{prop:LLieAct_iso_LMonAct} we obtain:
\begin{proposition}
For each $B$ in $\LieC$ the kernel functor $\Ker_B \colon \Pt_B(\LieC) \to \C$ factors through the forgetful functor $\Lie(G_{U(B)}) \colon \Lie(\MonActu{U(B)}{\Cb}) \to \LieC$ via an equivalence of categories.
\end{proposition}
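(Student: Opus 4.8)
The plan is to glue together the two facts just recorded. By the previous proposition there is an equivalence of categories $W\colon\Pt_B(\LieC)\to\LLieAct{B}{\Cb}$ fitting into the commuting triangle~\eqref{diag:equiv_pts_llieact}, i.e.\ with $\Lie(\overline{G_B})\circ W=\Ker_B$; and by Proposition~\ref{prop:LLieAct_iso_LMonAct} there is an isomorphism of categories $\Psi\colon\LMonAct{U(B)}{\Cb}\to\LLieAct{B}{\Cb}$, the identity on morphisms, with object map $((X,\phi),b)\mapsto((X,\phi(\nu\t\id)),b)$. First I would form the composite $E=\Psi^{-1}\circ W$, which is an equivalence $\Pt_B(\LieC)\to\Lie(\MonActu{U(B)}{\Cb})$ since $W$ is an equivalence and $\Psi$ an isomorphism.

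It then remains to check that $\Lie(G_{U(B)})\circ E=\Ker_B$, and for this it suffices to show $\Lie(\overline{G_B})\circ\Psi=\Lie(G_{U(B)})$. But $\overline{G_B}$ and $G_{U(B)}$ are the forgetful functors to $\C$, retaining only the underlying object; on objects $\Psi$ alters the action carried by $X$ but changes neither $X$ nor the Lie bracket $b$, and $\Psi$ is the identity on morphisms, so $\overline{G_B}$ precomposed with the underlying isomorphism of $\Psi$ coincides with $G_{U(B)}$ on objects and morphisms, and applying $\Lie(-)$ preserves this. (Indeed $\Psi$ is nothing but $\Lie(-)$ applied to the isomorphism of Proposition~\ref{prop:mon_act_isomorphic_lie_act}, which was set up precisely to commute with these forgetful functors.) Hence
\[
\Lie(G_{U(B)})\circ E=\Lie(\overline{G_B})\circ\Psi\circ\Psi^{-1}\circ W=\Lie(\overline{G_B})\circ W=\Ker_B,
\]
which is exactly the asserted factorization, with $E$ the required equivalence.

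The argument is purely formal, so I do not expect a real obstacle. The only point calling for a little care is the identity $\Lie(\overline{G_B})\circ\Psi=\Lie(G_{U(B)})$: one has to unwind the definitions of the two monoidal categories $\MonActu{U(B)}{\Cb}$ and $\LieActu{B}{\Cb}$, and of how $\Lie(-)$ acts on functors, to be sure the underlying-object functors are literally intertwined by $\Psi$ rather than merely comparable through it. Since the isomorphisms in play are the identity on both underlying objects and morphisms, this is routine bookkeeping.
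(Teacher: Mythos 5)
Your argument is correct and is exactly what the paper intends: the paper states this proposition with no written proof beyond ``combining the previous proposition with Proposition~\ref{prop:LLieAct_iso_LMonAct}'', which is precisely your composition of $W$ with the inverse of the isomorphism of Proposition~\ref{prop:LLieAct_iso_LMonAct} and the observation that this isomorphism commutes with the forgetful functors to $\LieC$. You have simply made explicit the routine bookkeeping the authors leave implicit.
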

We are now ready to prove our main theorem:
\begin{theorem}\label{theorem:main}
For a symmetric monoidal closed category $\Cb = (\C,\t,I,\az,\lz,\rz,\sz)$ such that $\C$ is cocomplete and additive, the category $\LieC$ is \LACC.
\end{theorem}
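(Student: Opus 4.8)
The plan is to obtain the theorem formally from the results of Sections~\ref{monoidal functors} and~\ref{actions} via the reduction of \cite[Theorem 5.1]{Gray2012}: that theorem tells us that a pointed, finitely complete, protomodular category is \LACC{} as soon as every kernel functor $\Ker_B$ has a right adjoint, so I would first check that $\LieC$ meets these hypotheses. The zero object is $(0,0)$, finite limits are created by the forgetful functor $\LieC \to \C$, and a $\LieC$-morphism is invertible as soon as its underlying $\C$-morphism is; hence the split short five lemma for $\LieC$ follows from the corresponding property of the additive category $\C$, so $\LieC$ is protomodular. This leaves only the task of producing, for each object $B$ of $\LieC$, a right adjoint of $\Ker_B \colon \Pt_B(\LieC) \to \LieC$.

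For that I would invoke the final proposition of Section~\ref{actions}, which exhibits $\Ker_B$, up to the equivalence of categories $\Pt_B(\LieC) \simeq \LMonAct{U(B)}{\Cb}$ constructed there, as the induced forgetful functor $\Lie(G_{U(B)}) \colon \LMonAct{U(B)}{\Cb} \to \LieC$, where $U(B)$ is regarded with the cocommutative bimonoid structure $\widetilde U(B)$. An equivalence of categories has a right adjoint (any quasi-inverse), so it suffices to give one for $\Lie(G_{U(B)})$; this is exactly Proposition~\ref{prop:forget_lmonact_right_adj}, itself an instance of the adjoint-lifting Proposition~\ref{prop:adjunction_lifts_to_lie} applied to the strict monoidal, additive functor $G_{U(B)} \colon \MonAct{U(B)}{\Cb} \to \Cb$, whose right adjoint $X \mapsto X^{U(B)}$ exists because $\Cb$ is monoidal closed. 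Composing the right adjoint of $\Lie(G_{U(B)})$ with a quasi-inverse of the equivalence yields the desired right adjoint of $\Ker_B$, and a last application of \cite[Theorem 5.1]{Gray2012} finishes the argument.

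Since every step beyond the bookkeeping has already been established in the paper, I do not anticipate a serious obstacle; the only place demanding a few words of care is the preliminary check that $\LieC$ is pointed and protomodular in the present generality — the step where the additivity of $\C$ is genuinely used — after which the proof is a concatenation of the adjunctions and equivalences already in hand.
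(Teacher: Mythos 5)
Your argument is correct and follows essentially the same route as the paper: the paper's proof likewise combines the factorisation of $\Ker_B$ through $\Lie(G_{U(B)})$ via the equivalence $\Pt_B(\LieC)\simeq \LMonAct{U(B)}{\Cb}$ with Proposition~\ref{prop:forget_lmonact_right_adj}, and then invokes \cite[Theorem 5.1]{Gray2012}. Your additional explicit verification that $\LieC$ is pointed and protomodular (via creation of finite limits and reflection of isomorphisms by the forgetful functor to the additive category $\C$) is a harmless and correct elaboration of a step the paper leaves implicit.
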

\begin{proof}
Combining the previous proposition and Proposition~\ref{prop:forget_lmonact_right_adj} we see that for each~$B$ in $\LieC$ the functor $\Ker_B \colon \Pt_B(\LieC) \to \LieC$ has a right adjoint. The claim now follows by \cite[Theorem 5.1]{Gray2012}.
\end{proof}

\section{Examples}\label{examples}
\begin{example}
Let $R$ be a commutative ring and consider the category of chain complexes of $R$-modules. If $(V, d)$ and $(V', d')$ are chain complexes, the tensor product of $(V,d)$ and $(V',d')$ is the chain complex $(V\t V', \delta)$ where
\[
(V \t V')_i = \bigoplus_{j + k = i}V_j \t V'_k,
\]
and $\delta$ is defined for all $v \in V_j$ and $v' \in V'_k$ by
\[
\delta(v \t v') = d(v) \t v' + (-1)^{j} v \t d'(v').
\]
The symmetry isomorphism $\sz \colon V \t V' \to V' \t V$ is defined for all $v \in V_j$ and $v' \in V'_k$ by $\sz(v \t v') = (-1)^{ij} v' \t v$. 
With this tensor product, symmetry isomorphism and the remaining structure defined canonically, it forms a symmetric monoidal closed category whose underlying category is cocomplete and abelian.

Its category of internal Lie algebras is equivalent to the category whose objects are \emph{differential graded Lie algebras}, i.e., a $\mathbb{Z}$-graded $R$-module $V$, with a linear map $d \colon V \to V$ of degree $-1$ such that $d \circ d = 0$, and a bilinear map $[-, -] \colon V \t V \to V$ of degree zero satisfying: for all homogeneus $x, y, z$ in $V$ 
\begin{enumerate}
	\item $[x, y] = -(-1)^{|x||y|} [y, x]$,
	\item $(-1)^{|x||z|} [x, [y, z]] + (-1)^{|y||x|} [y, [z, x]] + (-1)^{|z||y|} [z, [x, y]]$,
	\item $d([x, y]) = [d(x), y] + (-1)^{|x|}[x, d(y)]$
\end{enumerate}
where $|v|$ denotes the degree of $v$.
By Theorem~\ref{theorem:main} it follows that the category of differential graded Lie algebras is \LACC.
\end{example}

\begin{example}
If in the previous example we consider the full subcategory formed by chain complexes where $d = 0$, internal Lie algebras are essentially \emph{Lie colour algebras}. 
Applying Theorem~\ref{theorem:main} again we know that the category of Lie colour algebras is \LACC.
	
In a similar way, the category of Lie superalgebras over a commutative ring can be seen as the category of internal Lie algebras in the symmetric monoidal closed category of \emph{super $R$-modules} and hence it is \LACC.
\end{example}

\begin{example}
Let us consider now the category whose objects are linear maps $f \colon V \to W$ and morphisms are homomorphisms of $R$-modules making the diagram commutative
\[
\xymatrix{
V \ar[r] \ar[d]_-{f} & V' \ar[d]^-{f'} \\
W \ar[r] & W'
}
\]
There is a tensor product, called the \emph{infinitesimal tensor product} defined as
\[
\left(\vcenter{\xymatrix{ V \ar[d]^-{f} \\ W }}\right) \t \left(\vcenter{\xymatrix{ V' \ar[d]^-{f'} \\ W' }}\right) =
\left(\vcenter{\xymatrix{ (V \t W') \oplus (W \t V') \ar[d]^-{[f \t \id, \id \t f']} \\ W \t W' }}\right)
\]

This monoidal category was introduced in \cite{Loday-Pirashvili} as \emph{the tensor category of linear maps}, and in more recent papers was renamed to the \emph{Loday-Pirashvili category}~\cite{FCGMLa, GMLa, Ro}, denoted by $\mathcal{LP}$. An internal Lie algebra in this category is a linear map $f \colon M \to \mathfrak{g}$, where~$\mathfrak{g}$ is a Lie algebra, $M$ is a right $\mathfrak{g}$-module and $f$ preserves the $\mathfrak{g}$-action. Once more, as a consequence of Theorem~\ref{theorem:main}, the category of Lie algebras in $\mathcal{LP}$ is \LACC. Furthermore, the category of Leibniz algebras can be found as a full reflective subcategory of internal Lie algebras in $\mathcal{LP}$, but it is known that the category of Leibniz algebras is not \LACC~\cite{GM-Vdl}. 
\end{example}

\begin{example}
	Whereas the categories of \emph{Hom-Lie algebras} and \emph{multiplicative Hom-Lie algebras} are known to be semi-abelian categories satisfying very few categorical-algebraic properties (see~\cite{CaGM}), it was shown in~\cite{GoVe} that \emph{regular Hom-Lie algebras} can be found as internal categories in a certain symmetric monoidal category, and therefore, again by Theorem~\ref{theorem:main}, it is \LACC.
\end{example}

\section*{Acknowledgements}
The first author is grateful to Stellenbosch University for their kind hospitality during his stay there.


\end{document}